\documentclass[a4paper,12pt,reqno]{amsart}
\usepackage{amsfonts}
\usepackage[colorlinks,linkcolor=blue,citecolor=blue]{hyperref}
\usepackage{latexsym, amssymb, amsmath, amsthm, bbm}
\usepackage[all]{xy}
\usepackage{enumerate}
\usepackage{mathrsfs}
\usepackage{geometry}
\usepackage{pgfplots}

\def \k{\Bbbk}
\def \id{\operatorname{id}}

\usepackage{mathtools}

\numberwithin{equation}{section}
\theoremstyle{plain}
 \newtheorem{theorem}{Theorem}[section]

\newtheorem{lemma}[theorem]{Lemma}
\newtheorem{proposition}[theorem]{Proposition}
\newtheorem{corollary}[theorem]{Corollary}
\newtheorem{definition}[theorem]{Definition}
\newtheorem{example}[theorem]{Example}

\begin{document}
\title[Simple-shifted monoidal derived equivalences]{Simple-shifted monoidal derived equivalences for tensor categories}
\thanks{}
\author[J. Yu]{Jing Yu}
\address{School of Mathematical Sciences, University of Science and Technology of China, Hefei 230026, China}
\email{yujing46@ustc.edu.cn}
\thanks{2020 \textit{Mathematics Subject Classification}. 18G80, 18M05, 16T05}
\keywords{}

\begin{abstract}
In this paper, we investigate monoidal derived equivalences between tensor categories with infinitely many isomorphism classes of simple objects. We introduce the notion of a simple-shifted monoidal derived equivalence and prove that a simple-shifted monoidal derived equivalence gives rise to a tensor equivalence. In addition, we investigate which tensor categories have the property that every monoidal derived equivalence between them is necessarily simple-shifted, and we establish some sufficient conditions for this property. As a corollary, we prove that if a tensor category is monoidally derived equivalent to the finite-dimensional representation category of either the quantized enveloping algebra $U_q(\mathfrak{g})$, the quantized Borel subalgebra $U_q^{\geq 0}(\mathfrak{g})$, or the quantized coordinate ring $\mathcal{O}_{q}(G)$ (for a suitable $q$), it is tensor equivalent to the corresponding category.
\end{abstract}
\date{}
\maketitle

\section{Introduction}
Monoidal triangulated categories arise naturally in stable homotopy theory, algebraic geometry, and representation theory.
In the setting of symmetric monoidal triangulated categories, Balmer developed tensor triangular geometry, providing a systematic framework for studying the prime tensor ideals and supports \cite{Bal05}. Nakano, Vashaw, and Yakimov extended this framework to monoidal triangulated categories without assuming a braiding, establishing a noncommutative theory of spectra and support with applications to finite-dimensional Hopf algebras and small quantum Borel algebras \cite{NVY22}. Subsequent developments address the tensor product property for cohomological support maps \cite{NVY22b} and the relationship between the cohomological support theory and the universal support theory based on the noncommutative Balmer spectra of the stable (triangulated) categories for finite tensor categories \cite{NVY24}. These developments illustrate how the interaction between tensor products and triangulated structures reveals information that is not captured by the triangulated structure alone.

It is natural to ask to what extent an abelian category is determined by its derived category. In general, derived equivalence does not imply equivalence of the underlying abelian categories. Rickard's derived Morita theory \cite{Ric89} provides a systematic framework for understanding such equivalences through tilting complexes. For monoidal categories, however, a derived equivalence preserving the monoidal structure imposes extra constraints and can make reconstruction possible. In algebraic geometry, the term reconstruction theorem refers to results that characterize this phenomenon; see \cite{Bal02,BO01}. For algebras, Aihara and Mizuno \cite{AM17} proved that, among preprojective algebras of Dynkin type, derived equivalence forces Morita equivalence. Zhang and Zhou \cite{ZZ22} showed that an analogous phenomenon holds for finite-dimensional hereditary weak bialgebras. Furthermore, Xu and Zheng \cite{XZ26} showed that, for tensor categories with finitely many isomorphism classes of simple objects, tensor equivalence is equivalent to monoidal triangulated equivalence of their derived categories.
In our previous work \cite{Yu26}, we studied reconstruction theorems for suitable abelian subcategories of tensor categories with infinitely many isomorphism classes of simple objects, using triangulated module equivalences between triangulated module categories.
More recently, Tan \cite{Tan26} established reconstruction for semisimple tensor categories with possibly infinitely many isomorphism classes of simple objects.
However, these results do not settle the general reconstruction problem for nonsemisimple tensor categories with infinitely many isomorphism classes of simple objects.

This paper is devoted to monoidal derived equivalences between tensor categories with infinitely many isomorphism classes of simple objects. Note that in the case of finitely many isomorphism classes of simple objects, Xu and Zheng showed that the presence of a monoidal structure ensures that the heart of the standard
$t$-structure is carried to the heart of the standard
$t$-structure. The central idea of their proof is that a derived equivalence sends the standard $t$-structure to a new $t$-structure, and that any two bounded $t$-structures on the derived category of an abelian category of finite length with finitely many isomorphism classes of simple objects are related \cite[Example 4.5]{CLZ23}. The monoidal structure then exploits this relation to force the new $t$-structure to coincide with the standard one. Our previous work \cite{Yu26} was guided by the same idea.

A natural idea then arises: we still consider the $t$-exact functors in the sense of  \cite[Subsection 2.2]{CHZ19}, but instead of requiring that the heart of the standard $t$-structure be mapped to the heart of the standard $t$-structure, we allow it to be mapped to the heart of another $t$-structure. The question is then whether a monoidal derived equivalence still guarantees a monoidal equivalence. In fact, Tan's study of monoidal derived equivalences for semisimple tensor categories (\cite{Tan26}) is precisely of this type. We also expect the rigidity assumption on the tensor categories to play a genuine role in monoidal derived equivalences, so that such an equivalence forces a tensor equivalence.

In this paper, we first introduce the notion of a simple-shifted functor (see Definition \ref{def:simple-shifted}). It is specified by a triple $(F,\psi,r)$, where
$F:\mathbf{D}^{b}(\mathscr{A}) \to \mathbf{D}^{b}(\mathscr{B})$ is a triangulated functor between bounded derived categories of abelian categories,
$\psi:\mathcal{S}_{\mathscr{A}}\to \mathcal{S}_{\mathscr{B}}$ is a map of sets,
and $r:\mathcal{S}_{\mathscr{A}}\to \mathbb{Z}$ is a map of sets,
subject to the condition $F(S)\cong \psi(S)[r(S)]$ for all $S\in \mathcal{S}_{\mathscr{A}}$. Here $\mathcal{S}_{\mathscr{A}}$ and $\mathcal{S}_{\mathscr{B}}$ denote the sets of isomorphism classes of simple objects in $\mathscr{A}$ and $\mathscr{B}$, respectively. We show that a simple-shifted equivalence ensures that the underlying abelian categories are equivalent (see Proposition \ref{prop:equ}), and moreover that a simple-shifted monoidal equivalence ensures that the underlying multiring categories are tensor equivalent (see Proposition \ref{prop:tensorequ}). We then investigate which tensor categories have the property that every monoidal derived equivalence is necessarily simple-shifted. Along the way, we found that rigidity imposes additional constraints on monoidal derived equivalences; see, for example, Lemma \ref{lem:invertible}.
Our main result is Theorem \ref{thm:Cgamma}, which states that:
\begin{theorem}
Let $\mathscr{C}$ and $\mathscr{D}$ be tensor categories, and let $F: \mathbf{D}^b(\mathscr{C})\rightarrow \mathbf{D}^b(\mathscr{D})$ be a monoidal derived equivalence. Write $\mathscr{C}\cong \bigoplus_{\gamma\in \Gamma} \mathscr{C}^{\gamma},$ where each $\mathscr{C}^{\gamma}$ is a block of $\mathscr{C}$. Suppose that, for each $\gamma \in \Gamma$, at least one of the following conditions holds.
\begin{itemize}
    \item [(i)] For every simple object $S\in \mathscr{C}^\gamma$, there exists a tensor subcategory $\mathscr{E}_{S}$ of $\mathscr{C}$ with finitely many simple objects such that either $S\otimes S^* \in \mathscr{E}_{S}$ or $S^*\otimes S\in \mathscr{E}_{S}$.
    \item[(ii)] For every simple object $S\in \mathscr{C}^\gamma$, we have $S\cong S^*.$
    \item [(iii)] There exists a simple object $S^\gamma\in \mathscr{C}^{\gamma}$ such that every simple object $S\in \mathscr{C}^{\gamma}$ is of the form $S\cong L_S\otimes S^\gamma \otimes R_S$, where $L_S, R_S\in \mathscr{C}$ are invertible objects satisfying $$\min\{i : H^i(F(L_S\otimes R_S)) \neq 0\}=\max\{i : H^i(F(L_S\otimes R_S)) \neq 0\}=0.$$
\end{itemize}
Then $F$ is simple-shifted. Moreover, $\mathscr{C}$ is tensor equivalent to $\mathscr{D}.$
\end{theorem}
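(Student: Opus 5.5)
We reduce everything to showing that $F$ is simple-shifted. Once that is done, Proposition \ref{prop:tensorequ} applies to the simple-shifted monoidal equivalence $F$ and gives a tensor equivalence $\mathscr{C}\simeq\mathscr{D}$. Since $\mathscr{C}$ decomposes into blocks and each simple object lies in a single block, it suffices to fix $\gamma\in\Gamma$ and a simple $S\in\mathscr{C}^\gamma$. For such $S$ we must show that $F(S)$ is concentrated in a single degree and that its unique nonzero cohomology is simple in $\mathscr{D}$. We will use that $F$ is a monoidal triangulated equivalence. Hence $F(\mathbf{1})\cong\mathbf{1}$, $F$ preserves duals, so $F(S^*)\cong F(S)^*$, and $F$ preserves invertible objects. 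The basic tool is Lemma \ref{lem:invertible}: an invertible object is sent to a shift of an invertible (hence simple) object.

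The key steps run as follows.

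\textbf{Step 1: an amplitude estimate.} Write $a(X)=\min\{i: H^i(X)\neq 0\}$ and $b(X)=\max\{i:H^i(X)\ne0\}$. Since $\mathscr{D}$ is a tensor category, $\otimes$ is exact in each variable. A Künneth-type spectral sequence argument, or induction on the lengths of the truncation filtrations, then shows
\[
H^{a(X)+a(Y)}(X\otimes Y)\cong H^{a(X)}(X)\otimes H^{a(Y)}(Y)\neq 0,
\]
and similarly in the top degree. This uses that the tensor product of nonzero objects in a tensor category is nonzero. So the amplitude $b-a$ is additive under $\otimes$. In particular, if $X\otimes Y$ is concentrated in one degree, then so are $X$ and $Y$.

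\textbf{Step 2: the three conditions.} Under (ii), $F(S)\otimes F(S)\cong F(S\otimes S^*)$ contains $\mathbf{1}$, since $\mathbf{1}$ splits off or appears as the image of coevaluation. We want to conclude that $F(S)$ is concentrated in one degree. The plan is to combine the amplitude bound with the fact that $\mathrm{Hom}(\mathbf{1},F(S)\otimes F(S)^*)$ is one-dimensional and lives in a single degree. The cleaner route is to show that $F(S)^*$ has amplitude equal to that of $F(S)$, which follows since duality reverses degrees. Then we use the evaluation $F(S)\otimes F(S)^*\to\mathbf{1}$ to argue that the extreme cohomologies $H^a(F(S))$ and $H^b(F(S))$ pair nontrivially only if $a=b$.

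Under (i), the subcategory $\mathscr{E}_S$ has finitely many simples. So $F$ restricted to $\mathbf{D}^b(\mathscr{E}_S)$ lands in a thick monoidal subcategory, and the Xu–Zheng argument via \cite[Example 4.5]{CLZ23} shows that the objects of $\mathscr{E}_S$ are sent to shifts of objects in the heart. Then $F(S\otimes S^*)$ is concentrated in one degree, and Step 1 forces $F(S)$ to be concentrated in one degree.

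Under (iii), we first handle $S^\gamma$ by the same reasoning. Then $F(S)\cong F(L_S)\otimes F(S^\gamma)\otimes F(R_S)$ with $F(L_S),F(R_S)$ shifted invertibles, and the hypothesis on $F(L_S\otimes R_S)$ pins the shift. Here we have a gap: in (iii) as stated, $S^\gamma$ itself must be shown concentrated. We expect the intended argument to use that $r$ is constant on a block, via Ext-connectedness. A nonsplit extension between simples $S,T$ forces $r(S)=r(T)$, since $F$ is an equivalence and the heart cohomology must match. So it suffices to shift-control one simple per block, together with the invertible twists.

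\textbf{Step 3: simplicity.} Once $F(S)\cong M[r]$ with $M\in\mathscr{D}$, we apply the same argument to $F^{-1}$, which is also a monoidal equivalence. A length comparison then shows that $M$ is simple. Concretely, a proper subobject of $M$ would pull back to a nontrivial morphism decomposing $S$.

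\textbf{Main obstacle.} We expect the main obstacle to be Step 2 in the cases without finiteness, in particular showing that $F(S)$ has amplitude zero from the self-duality or $\mathbf{1}\subset S\otimes S^*$ data alone. Our first attempt will be to use rigidity: the composition
\[
F(S)\to F(S)\otimes F(S)^*\otimes F(S)\to F(S)
\]
is the identity, and we compare the extreme-degree cohomology via Step 1 to obtain a contradiction when $a<b$.
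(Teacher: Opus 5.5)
There is a genuine gap, and it is where you expected it: the forward argument you sketch for (ii) and (iii) cannot work.

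\textbf{Why the forward direction fails.} For $Y=F(S)$ with $S\cong S^*$ you only get $Y\cong Y^*$. Hence $a(Y)=-b(Y)$ and $H^{b(Y)}(Y)\cong H^{a(Y)}(Y)^*$, which is perfectly consistent with $a(Y)<b(Y)$. Since $H^0(Y^*\otimes Y)=\bigoplus_p H^{-p}(Y^*)\otimes H^p(Y)$ with $H^{-p}(Y^*)\cong H^p(Y)^*$, evaluation and coevaluation only pair each $H^p(Y)$ with its own dual. So the zigzag identities hold degree by degree and amplitude additivity yields no contradiction. The underlying problem is that (ii) and (iii) are hypotheses on simple objects of $\mathscr{C}$, whereas the cohomology of $F(S)$ lives in $\mathscr{D}$, where you know nothing about duals or twists of simples.

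The same obstruction is why (iii) stalls at $S^\gamma$. The condition on $F(L_S\otimes R_S)$ gives $F(S)\cong L'\otimes F(S^\gamma)\otimes R'$ with $L',R'$ invertible, so everything reduces to the one object you cannot control. Your fallback, that $r$ is constant on blocks, is Lemma \ref{lem:ext1=0}. Its proof already assumes that $F$ and $F^{-1}$ are simple-shifted, so using it here is circular.

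Step 3 is also unsupported. A subobject $N\subseteq M$ pulls back to a morphism $F^{-1}(N[r])\to S$ whose source is not known to lie in the heart, so nothing decomposes $S$. For (i), your concentration argument is essentially the paper's Theorem \ref{thm:SotimesS*} via Lemma \ref{lem:finite}. Simplicity there, however, needs more: the auxiliary finite tensor subcategory $\mathscr{F}_{\mathcal{R}}\subseteq\mathscr{D}$, exactness of $H^0F^{-1}$ on it, and an argument with $\operatorname{ev}$.

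\textbf{The missing idea.} Work backwards and use the standard $t$-structure on $\mathbf{D}^b(\mathscr{D})$: morphisms from a heart object to a negative shift of a heart object vanish. Take $U\in\mathscr{D}$ simple and $K=F^{-1}(U)$, with $a=a(K)$ and $b=b(K)$. Since $\operatorname{End}(K)\cong\operatorname{End}_{\mathscr{D}}(U)$ is a division algebra, $K$ is indecomposable and lies in a single block $\mathscr{C}^{\gamma_0}$. So the simple subquotients of its cohomology are objects of $\mathscr{C}$ to which (ii) or (iii) applies.

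\emph{Case (ii).} A simple quotient $T\cong T^*$ of $H^b(K)$ gives a nonzero map $f\colon H^b(K)\twoheadrightarrow T\cong T^*\hookrightarrow H^b(K)^*\cong H^{-b}(K^*)$. Splicing $f$ with the truncation maps $K\to H^b(K)[-b]$ and $H^{-b}(K^*)[-b]\to K^*[-2b]$ gives $g\colon K\to K^*[-2b]$ with $H^b(g)=f\neq 0$. On the other hand, $F(g)\in\operatorname{Hom}(U,U^*[-2b])$, which is $0$ if $b>0$. Dually $a\geq 0$, so $K\cong H^0(K)$. Then $F(f)$ is a nonzero map between the simple objects $U$ and $U^*$, hence an isomorphism, which forces $H^0(K)\cong T$.

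\emph{Case (iii).} Suppose $a<b$. Take a simple subobject $S$ of $H^{a}(K)$ and a simple quotient $T\cong L\otimes S\otimes R$ of $H^{b}(K)$. The same splice produces a nonzero morphism $K\to(L\otimes K\otimes R)[a-b]$. Its image lies in $\operatorname{Hom}(U,(F(L)\otimes U\otimes F(R))[a-b])$, which is $0$. Indeed, by Lemma \ref{lem:invertible} and the hypothesis of (iii) applied to both $S$ and $T$, the shifts of $F(L)$ and $F(R)$ cancel, so $F(L)\otimes U\otimes F(R)$ lies in the heart. Simplicity of $H^a(K)$ then follows by the analogous isomorphism argument.

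Hence $F^{-1}$ is simple-shifted. Lemma \ref{lem:psibijective} transfers this to $F$, and Proposition \ref{prop:tensorequ} gives the tensor equivalence.
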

It is worth noting that the above theorem covers both the finitely many isomorphism classes of simple objects case of Xu and Zheng \cite{XZ26} and the case of semisimple tensor categories with infinitely many simple objects due to Tan \cite{Tan26}. As a corollary, we further show that a monoidal derived equivalence between pointed tensor categories implies a tensor equivalence between them (see Proposition \ref{prop:pointed}). In addition, the results of Section \ref{section4} are applied to the finite-dimensional representation categories of the quantized enveloping algebra $U_q(\mathfrak{g})$, the quantized Borel subalgebra $U_q^{\geq 0}(\mathfrak{g})$, and the quantized coordinate ring $\mathcal{O}_{q}(G)$ (for a suitable $q$). It is proved that whenever a tensor category is monoidally derived equivalent to the finite-dimensional representation category of any of these Hopf algebras, it is tensor equivalent to that category; see Proposition \ref{prop:quantum}.

This paper is organized as follows. We begin in Section \ref{section2} by recalling some fundamental notions and results of tensor categories and monoidal triangulated categories. In Section \ref{section3}, we introduce the definition of a simple-shifted monoidal derived equivalence and establish that a simple-shifted monoidal derived equivalence yields a tensor equivalence. Section \ref{section4} presents several sufficient conditions for the existence of simple-shifted monoidal triangulated equivalences. Finally, in Section \ref{section5}, we prove that if a tensor category is monoidally derived equivalent to the finite-dimensional representation category of either the quantized enveloping algebra $U_q(\mathfrak{g})$, the quantized Borel subalgebra $U_q^{\geq 0}(\mathfrak{g})$, or the quantized coordinate ring $\mathcal{O}_{q}(G)$ (for a suitable $q$), then it is tensor equivalent to the corresponding category.
\subsection*{Notation}
Throughout this paper, $\k$ denotes an algebraically closed field. Let $\mathbb{Z}$ denote the ring of integers. All categories we consider are essentially small, that is, the isomorphism classes of their objects form a set. Furthermore, all objects in an abelian category have finite length. All subcategories are assumed to be full. For an abelian category $\mathscr{A}$, let $\mathcal{S}_\mathscr{A}$ denote the set of isomorphism classes of simple objects in $\mathscr{A}$. For any $M\in\mathscr{A}$, we still write $M$ for the stalk complex in the bounded derived category $\mathbf{D}^{b}(\mathscr{A})$ with $M$ concentrated in degree $0$.

\section{Preliminaries}\label{section2}
In this section, we recall some basic facts concerning tensor categories and monoidal triangulated categories. For further details of monoidal categories and triangulated categories, we refer the reader to \cite{EGNO15} and \cite{Hap88}.
\subsection{Multiring categories and multitensor categories}
Let $(\mathscr{C}, \otimes, a, \mathbbm{1}_{\mathscr{C}}, l,r)$ be a monoidal category. Recall that an object $X^*$ in $\mathcal{C}$ is called a \textit{left dual} of $X$ (see \cite[Definition 2.10.1]{EGNO15}) if there exist morphisms
$$
\operatorname{ev}_X : X^* \otimes X \rightarrow \mathbbm{1}_{\mathscr{C}},
\qquad
\operatorname{coev}_X : \mathbbm{1}_{\mathscr{C}} \rightarrow X \otimes X^*,
$$
called the \textit{evaluation} and \textit{coevaluation}, respectively, such that the compositions
\begin{equation*}
X \xrightarrow{\operatorname{coev}_X \otimes \mathrm{id}_X}
(X \otimes X^*) \otimes X
\xrightarrow{a_{X,X^*,X}}
X \otimes (X^* \otimes X)
\xrightarrow{\mathrm{id}_X \otimes \operatorname{ev}_X}
X,
\end{equation*}
\begin{equation*}
X^* \xrightarrow{\mathrm{id}_{X^*} \otimes \operatorname{coev}_X}
X^* \otimes (X \otimes X^*)
\xrightarrow{a_{X^*,X,X^*}^{-1}}
(X^* \otimes X) \otimes X^*
\xrightarrow{\operatorname{ev}_X \otimes \mathrm{id}_{X^*}}
X^*
\end{equation*}
are the identity morphisms.
An object ${}^*X$ in $\mathcal{C}$ is called a \textit{right dual} of $X$ (see \cite[Definition 2.10.2]{EGNO15}) if there exist morphisms
$$
\operatorname{ev}^{\prime}_X : X \otimes {}^*X \rightarrow \mathbbm{1}_{\mathscr{C}},
\qquad
\operatorname{coev}^{\prime}_X : \mathbbm{1}_{\mathscr{C}} \rightarrow {}^*X \otimes X,
$$
such that the compositions
\begin{equation*}
X \xrightarrow{\mathrm{id}_X \otimes \operatorname{coev}^{\prime}_X}
X \otimes ({}^*X \otimes X)
\xrightarrow{a_{X,{}^*X,X}^{-1}}
(X \otimes {}^*X) \otimes X
\xrightarrow{\operatorname{ev}^{\prime}_X \otimes \mathrm{id}_X}
X,
\end{equation*}
\begin{equation*}
{}^*X \xrightarrow{\operatorname{coev}^{\prime}_X \otimes \mathrm{id}_{{}^*X}}
({}^*X \otimes X) \otimes {}^*X
\xrightarrow{a_{{}^*X,X,{}^*X}}
{}^*X \otimes (X \otimes {}^*X)
\xrightarrow{\mathrm{id}_{{}^*X} \otimes \operatorname{ev}^{\prime}_X}
{}^*X
\end{equation*}
are the identity morphisms. An object in $\mathscr{C}$ is called \textit{rigid} if it admits both a left dual and a right dual. If every object in $\mathscr{C}$ is rigid, then $\mathscr{C}$ is called \textit{rigid} (see \cite[Definition 2.10.11]{EGNO15}). By \cite[Proposition 2.10.5]{EGNO15}, if $X\in\mathscr{C}$ admits a left (respectively, right) dual, then it is unique up to a unique isomorphism. Moreover, for the left and right dual of a morphism, see \cite[Section 2.10]{EGNO15}.

\begin{definition}\emph{(}\cite[Definitions 4.1.1 and 4.2.3]{EGNO15}\emph{)}
A multiring category over $\k$ is a $\k$-linear abelian monoidal category $\mathscr{C}$ whose tensor product is bilinear and biexact. If, in addition, $\mathscr{C}$ is rigid, then $\mathscr{C}$ is called a multitensor category. We say that a multiring (respectively, multitensor) category $\mathscr{C}$ is a ring (respectively, tensor) category if $\operatorname{End}_{\mathscr{C}}(\mathbbm{1}_{\mathscr{C}})\cong \k$.
\end{definition}

Note that the condition that the morphism spaces are finite-dimensional is omitted from our definition, as it is not essential for the results of this paper.

\begin{lemma}\label{lem:reduced}
Let $\mathscr{C}$ be a ring category with left duals, i.e., every object admits a left dual. Then, for any nonzero $X,Y\in\mathscr{C}$, we have $X\otimes Y\neq 0$.
\end{lemma}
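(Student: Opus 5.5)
The plan is to argue by contradiction: assume $X \otimes Y = 0$ with $X,Y$ nonzero, and use the left dual of $X$ to turn this into a statement about the unit object. Since $\operatorname{End}_{\mathscr{C}}(\mathbbm{1}_{\mathscr{C}})\cong\k$ is a field, the unit admits no nontrivial idempotents and is therefore indecomposable. This is the only place where the ``ring'' (rather than ``multiring'') hypothesis will enter.

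First, show that $\operatorname{coev}_X:\mathbbm{1}\to X\otimes X^*$ is nonzero. If it were zero, the first zigzag identity would give $\id_X = (\id_X\otimes\operatorname{ev}_X)\circ a\circ(\operatorname{coev}_X\otimes \id_X)=0$, contradicting $X\neq 0$. Symmetrically, $\operatorname{ev}_X\neq 0$.

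Second, show that $\mathbbm{1}$ is simple, so that $\operatorname{coev}_X$ is a monomorphism. This is the standard fact \cite[Theorem 4.3.8]{EGNO15} for ring categories with left duals, and I would reproduce its proof in this one-sided setting. Take a nonzero subobject $i:U\hookrightarrow\mathbbm{1}$, and let $p:\mathbbm{1}\to \mathbbm{1}/U$ be the cokernel. Tensoring with any object is exact, so $U\otimes -$ and $-\otimes U$ preserve monomorphisms. One then shows $U\otimes(\mathbbm{1}/U)=0$, using that $U\otimes \mathbbm{1}\cong U$, and checks that the resulting decomposition of $\mathbbm{1}$ gives an idempotent in $\operatorname{End}(\mathbbm{1})$. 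Since $\operatorname{End}(\mathbbm{1})\cong\k$, that idempotent is trivial, forcing $U=\mathbbm{1}$. This step is the main obstacle. The cleanest route I expect is the EGNO argument: for a left-rigid object $Z$, the functor $Z\otimes -$ is faithful, since $\operatorname{Hom}(Z\otimes A,B)\cong\operatorname{Hom}(A,Z^{*}{}'\otimes B)$-type adjunctions exist from the left dual. From this one deduces that a nonzero object $Z$ with $Z\otimes W=0$ forces $W=0$. In fact this faithfulness may yield the lemma directly, which I address next.

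The direct route runs as follows. Left duality gives an adjunction $\operatorname{Hom}(X\otimes Y, Z)\cong \operatorname{Hom}(Y, X^*\otimes Z)$; check the side conventions against the zigzags above, and if needed use $Y\otimes Y^*$ on the other side instead. If $X\otimes Y=0$, then $Y\otimes Y^*\neq 0$ receives $\operatorname{coev}_Y\neq0$. Also $X^*\otimes X\otimes Y\otimes Y^*=0$. I would then use the composite $\mathbbm{1}\xrightarrow{\operatorname{coev}_Y} Y\otimes Y^*$ together with $\operatorname{ev}_X\otimes\id : X^*\otimes X\otimes Y\otimes Y^*\to Y\otimes Y^*$. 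By simplicity of $\mathbbm{1}$ (step two), $\operatorname{ev}_X$ is an epimorphism onto $\mathbbm{1}$, since its image is a nonzero subobject of a simple object. Exactness of $-\otimes(Y\otimes Y^*)$ then makes $\operatorname{ev}_X\otimes\id$ surjective onto $Y\otimes Y^*\neq 0$. Its source is $X^*\otimes(X\otimes Y)\otimes Y^*=0$ up to associativity, which is a contradiction.

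So the key steps, in order, are: nonvanishing of $\operatorname{ev}$ and $\operatorname{coev}$ via the zigzags; simplicity of $\mathbbm{1}$ (the hard step, done via EGNO Theorem 4.3.8); surjectivity of $\operatorname{ev}_X$; and exactness of tensoring to reach the contradiction.
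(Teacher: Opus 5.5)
Your direct route is correct and is essentially the paper's argument in dual form. The paper uses simplicity of $\mathbbm{1}$ to make $\operatorname{coev}_Y$ a monomorphism, so that $X\cong X\otimes\mathbbm{1}\hookrightarrow X\otimes Y\otimes Y^*=0$. You instead use it to make $\operatorname{ev}_X$ an epimorphism and surject $0$ onto $Y\otimes Y^*\neq 0$; tensoring with $\id_Y$ alone, giving $0=X^*\otimes X\otimes Y\twoheadrightarrow Y$, would already suffice. Citing the EGNO semisimplicity of the unit, which the paper also relies on implicitly, is the right way to handle that step. Your alternative faithfulness sketch would not stand on its own: faithfulness of $Z\otimes -$ is essentially the lemma itself and does not follow from the duality adjunctions without $\operatorname{End}(\mathbbm{1})\cong\k$.
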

\begin{proof}
The proof is analogous to that of \cite[Proposition 3.11]{XZ26}. Indeed, we have
$$X\cong X\otimes \mathbbm {1}\hookrightarrow  X\otimes Y \otimes Y^*.$$
\end{proof}

\subsection{Monoidal triangulated categories}
\begin{definition}\emph{(} \cite[Subsection 2.2]{NVY22} \emph{)}
A monoidal triangulated category $(\mathscr{T},\otimes,[1],\mathbbm{1}_{\mathscr{T}})$ over $\k$ consists of a $\k$-linear triangulated category $(\mathscr{T},[1])$, a monoidal structure $\otimes$, and a unit object $\mathbbm{1}_{\mathscr{T}}$, where the bifunctor $\otimes$ is exact in each variable.
\end{definition}

\begin{example}\emph{(}\cite[Example 2.4]{XZ26}\emph{)}\rm\label{ex:Db(C)}
Given a multiring category $(\mathscr{C}, \otimes, \mathbbm{1}_{\mathscr{C}})$, the bounded homotopy category $\mathbf{K}^{b}(\mathscr{C})$ admits a monoidal structure. The tensor product in $\mathbf{K}^{b}(\mathscr{C})$ is defined as follows:
for any complexes $X^\bullet=(X^n, d^n_X)_{n\in\mathbb{Z}}, Y^\bullet=(Y^m, d^m_Y)_{m\in\mathbb{Z}}\in \mathbf{K}^{b}(\mathscr{C})$,  $$(X^\bullet\widetilde{\otimes} Y^\bullet)^n:=\bigoplus_{i+j=n}X^i\otimes Y^{j}$$
with differential
$$d^n_{X^{\bullet}\widetilde{\otimes} Y^{\bullet}}:=\sum\limits_{j+k=n}(d^j_{X}\otimes \id_{Y^k} +(-1)^j\id_{X^j}\otimes d_Y^k).$$
Let $[1]$ denote the shift functor of $\mathbf{K}^{b}(\mathscr{C})$.
Then $(\mathbf{K}^{b}(\mathscr{C}), \widetilde{\otimes}, [1], \mathbbm{1}_{\mathscr{C}})$ forms a monoidal triangulated category. The bounded derived category $(\mathbf{D}^{b}(\mathscr{C}), \widetilde{\otimes}, [1], \mathbbm{1}_{\mathscr{C}})$ with the inherited monoidal structure is also a monoidal triangulated category. Following \cite[Remark 2.5]{XZ26}, this bounded derived category is called a \textit{monoidal derived category}.
\end{example}

Let $\mathscr{C}$ be a multiring category, and let $(\mathbf{D}_{\mathscr{C}}^{\leq 0}, \mathbf{D}_{\mathscr{C}}^{\geq 0})$ be the standard $t$-structure on $\mathbf{D}^b(\mathscr{C})$,
where
\begin{equation*}
\begin{aligned}
\mathbf{D}_{\mathscr{C}}^{\leq 0}:=\{X\in \mathbf{D}^b(\mathscr{C})\mid H^i(X)=0,\forall i>0\},\\
\mathbf{D}_{\mathscr{C}}^{\geq 0}:=\{X\in \mathbf{D}^b(\mathscr{C})\mid H^i(X)=0,\forall i<0\}.
\end{aligned}
\end{equation*}
By \cite[Example 3.3(1)]{Big07}, $(\mathbf{D}_{\mathscr{C}}^{\leq 0}, \mathbf{D}_{\mathscr{C}}^{\geq 0})$ is a bounded $t$-structure compatible with the tensor structure (see \cite[Definition 3.2]{Big07} for the definition). We then have the following lemma.
\begin{lemma}\label{lem:Hn=HiHj}\emph{(}\cite[Theorem 4.1]{Big07}\emph{)}
Let $\mathscr{C}$ be a multiring category. Then for objects $X^\bullet, Y^\bullet\in \mathbf{D}^b(\mathscr{C})$ and an integer $n$ there is a natural isomorphism
$$
H^n(X^\bullet \widetilde{\otimes} Y^\bullet) = \coprod_{p+q=n} H^p(X^\bullet) \otimes H^q(Y^\bullet).
$$
\end{lemma}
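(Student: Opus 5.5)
The plan is to adapt the classical algebraic Künneth argument. In that argument flatness of cycles and boundaries is exactly what makes the $\operatorname{Tor}$-terms vanish, and here biexactness of $\otimes$ supplies this for every object. I will first prove the formula at the level of complexes, i.e.\ in $\mathbf{K}^b(\mathscr{C})$ with the tensor product $\widetilde{\otimes}$ of Example \ref{ex:Db(C)}, and then check that it descends to $\mathbf{D}^b(\mathscr{C})$.

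\emph{Step 1: the formula for complexes.} Let $X^\bullet, Y^\bullet$ be bounded complexes, and write $Z^p=\ker d_X^p$, $B^p=\operatorname{im} d_X^{p-1}$. Regard $Z^\bullet$ and $B^\bullet$ as complexes with zero differential. Then there is a short exact sequence of complexes
$$0\to Z^\bullet \to X^\bullet \to B^\bullet[1]\to 0 .$$
Tensoring degreewise with $Y^\bullet$ keeps it exact, since $\otimes$ is exact in the first variable and the total complexes are finite direct sums. This gives
$$0\to Z^\bullet\widetilde{\otimes} Y^\bullet\to X^\bullet\widetilde{\otimes} Y^\bullet\to B^\bullet[1]\widetilde{\otimes} Y^\bullet\to 0 .$$
Because $Z^\bullet$ has zero differential, $Z^\bullet\widetilde{\otimes}Y^\bullet$ is a direct sum of shifted copies of $Z^p\otimes Y^\bullet$. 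The functor $Z^p\otimes-$ is exact, so it commutes with cohomology, and hence
$$H^n(Z^\bullet\widetilde{\otimes}Y^\bullet)\cong\bigoplus_{p+q=n}Z^p\otimes H^q(Y^\bullet).$$
The same holds for $B^\bullet$.

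\emph{Step 2: the connecting map.} In the long exact cohomology sequence, I will identify the connecting morphism
$$\bigoplus_{p+q=n} B^{p+1}\otimes H^q(Y^\bullet)\to \bigoplus_{p+q=n+1} Z^{p}\otimes H^q(Y^\bullet)$$
with $\iota\otimes \id$, where $\iota:B^{p+1}\hookrightarrow Z^{p+1}$ is the inclusion. This is a routine diagram chase: lift an element along $d_X$, and the sign conventions of $\widetilde{\otimes}$ only introduce signs. By exactness of $-\otimes H^q(Y^\bullet)$, the map $\iota\otimes\id$ is a monomorphism. The long exact sequence therefore splits into short exact sequences, and
$$H^n(X^\bullet\widetilde{\otimes}Y^\bullet)\cong \operatorname{coker}(\iota\otimes\id)\cong\bigoplus_{p+q=n}(Z^p/B^p)\otimes H^q(Y^\bullet)=\bigoplus_{p+q=n}H^p(X^\bullet)\otimes H^q(Y^\bullet),$$
where the middle isomorphism again uses right exactness of $\otimes$. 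The isomorphism is induced by the natural map $Z^p\otimes Z^q\to (X^\bullet\widetilde{\otimes}Y^\bullet)^n$ on cycles, so it is natural in chain maps.

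\emph{Step 3: passage to $\mathbf{D}^b(\mathscr{C})$.} Take $Y^\bullet$ acyclic. Step 2 shows that $X^\bullet\widetilde{\otimes}Y^\bullet$ is acyclic, and the same holds in the other variable. Applying this to cones shows that $\widetilde{\otimes}$ preserves quasi-isomorphisms, so the formula and its naturality pass to the derived category.

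I expect the main obstacle to be Step 2: pinning down the connecting morphism precisely, with the signs coming from $\widetilde{\otimes}$, and verifying naturality of the resulting isomorphism independently of the choices made.
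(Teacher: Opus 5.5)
Your argument is correct, and it takes a different route from the paper. The paper gives no proof of its own. It obtains the formula from Biglari's abstract K\"unneth theorem \cite[Theorem 4.1]{Big07}, which holds in any tensor triangulated category carrying a bounded $t$-structure compatible with the tensor structure. It then notes, via \cite[Example 3.3(1)]{Big07}, that the standard $t$-structure on $\mathbf{D}^b(\mathscr{C})$ is such a $t$-structure.

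That route needs no chain-level sign bookkeeping and applies well beyond derived categories of multiring categories. However, it presupposes that $\widetilde{\otimes}$ is already well defined on $\mathbf{D}^b(\mathscr{C})$, which the paper takes from \cite{XZ26}.

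Your chain-level adaptation of the classical K\"unneth argument, with biexactness replacing flatness of cycles and boundaries, has three advantages: it is self-contained, it identifies the isomorphism explicitly as the cross product on cycles, and Step 3 shows as a by-product that $\widetilde{\otimes}$ preserves quasi-isomorphisms. In Step 3, no separate argument is needed for the other variable, since the right-hand side vanishes as soon as either factor is acyclic.

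For Step 2, which you flagged as the main obstacle, the chase goes as follows. A cycle of $B^{p+1}\otimes Y^{q}$ lies in $\ker(\id\otimes d_Y^q)=B^{p+1}\otimes Z^q(Y^\bullet)$, by exactness of $B^{p+1}\otimes-$. It lifts to $X^p\otimes Z^q(Y^\bullet)$, because $-\otimes Z^q(Y^\bullet)$ preserves the epimorphism $X^p\twoheadrightarrow B^{p+1}$. On such a lift the $\id\otimes d_Y$ part of the differential vanishes, so the connecting map is $\pm\,\iota\otimes\id$.

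Since $\mathscr{C}$ is not concrete, run this chase after an exact embedding into a module category of a small abelian subcategory containing the finitely many objects involved. This is legitimate because the chase uses only objects, morphisms and exactness facts already established in $\mathscr{C}$, not the tensor product itself.
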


Let $\mathscr{C}$ be a multiring category with left duals. Let $X^\bullet$ be any complex in $\mathbf{D}^b(\mathscr{C})$, written as
$$
X^\bullet:\ 0 \rightarrow X^{-n} \xrightarrow{d_X^{-n}} X^{-n+1} \xrightarrow{d_X^{-n+1}} \cdots \xrightarrow{d_X^{-1}} X^0 \xrightarrow{d_X^0} \cdots \xrightarrow{d_X^{m-2}} X^{m-1} \xrightarrow{d_X^{m-1}} X^m \rightarrow 0,
$$
where $m,n$ are positive integers. Let $Y^{-i}$ be the left dual of $X^i$, where $-n \leq i \leq m$. Then there exist evaluation and coevaluation maps denoted by
$$
\operatorname{ev}_{X^i} : Y^{-i} \otimes X^i \rightarrow \mathbbm{1}_{\mathscr{C}}, \qquad \operatorname{coev}_{X^i} : \mathbbm{1}_{\mathscr{C}} \rightarrow X^i \otimes Y^{-i}.
$$
Inspired by \cite[Example 2.27]{XZ26}, the complex
$$
Y^\bullet : 0 \rightarrow Y^{-m} \xrightarrow{(-1)^{m+2}(d_X^{m-1})^*} Y^{-m+1} \rightarrow \cdots \xrightarrow{(-1)^n(d_X^{-n+1})^*} Y^{n-1} \xrightarrow{(-1)^{n+1}(d_X^{-n})^*} Y^n \rightarrow 0
$$
is the left dual of $X^\bullet$, with evaluation
$$
\widetilde{\operatorname{ev}}_{X^\bullet} := \bigoplus_{i=-n}^m \operatorname{ev}_{X^i}
$$
and coevaluation
$$
\widetilde{\operatorname{coev}}_{X^\bullet} := \bigoplus_{i=-n}^m \operatorname{coev}_{X^i}.
$$
If $\mathscr{C}$ also has right duals, the right dual of $X^\bullet$ can be constructed similarly. By \cite[Proposition 4.2.9]{EGNO15}, the left dualization functor in $\mathscr{C}$ is exact. In other words, if $0\to M \to Z\to N\to 0$ is a short exact sequence in $\mathscr{C}$, then so is $0\to N^*\to Z^*\to M^*\to 0$. It follows that $H^{-i}(Y^\bullet)\cong (H^{i}(X^\bullet))^* $ for all $i\in\mathbb{Z}.$

For every nonzero object $X^\bullet \in D^b(\mathscr{C})$, define
$$
a(X^\bullet) = \min\{i : H^i(X^\bullet) \neq 0\}, \qquad b(X^\bullet) = \max\{i : H^i(X^\bullet) \neq 0\}.
$$
\begin{corollary}\label{coro:a,b}
Let $\mathscr{C}$ be a ring category with left duals.
Then we have $a(X^\bullet\widetilde{\otimes} Y^\bullet)=a(X^\bullet)+a(Y^{\bullet})$ and $b(X^\bullet\widetilde{\otimes} Y^\bullet)=b(X^\bullet)+b(Y^{\bullet})$ for all nonzero objects $X^\bullet, Y^\bullet \in \mathbf{D}^b(\mathscr{C})$.
\end{corollary}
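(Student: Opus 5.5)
The plan is to combine Lemma \ref{lem:Hn=HiHj} with Lemma \ref{lem:reduced}. Let $X^\bullet, Y^\bullet$ be nonzero objects of $\mathbf{D}^b(\mathscr{C})$. Since they are bounded and nonzero, the integers $a(X^\bullet)$, $b(X^\bullet)$, $a(Y^\bullet)$ and $b(Y^\bullet)$ are well defined and finite. Everything will follow once we know how the nonzero cohomology of $X^\bullet\widetilde{\otimes} Y^\bullet$ is distributed.

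By Lemma \ref{lem:Hn=HiHj}, for each $n$ we have
$$H^n(X^\bullet\widetilde{\otimes}Y^\bullet)\cong\coprod_{p+q=n}H^p(X^\bullet)\otimes H^q(Y^\bullet).$$
\begin{itemize}
\item \textbf{Vanishing below the lower bound.} If $n<a(X^\bullet)+a(Y^\bullet)$, then every pair $(p,q)$ with $p+q=n$ has $p<a(X^\bullet)$ or $q<a(Y^\bullet)$. In either case one of the factors $H^p(X^\bullet)$, $H^q(Y^\bullet)$ is zero, so every summand vanishes and $H^n(X^\bullet\widetilde{\otimes}Y^\bullet)=0$.
\item \textbf{Non-vanishing at the lower bound.} For $n=a(X^\bullet)+a(Y^\bullet)$, the same reasoning shows that the only possibly nonzero summand is $H^{a(X^\bullet)}(X^\bullet)\otimes H^{a(Y^\bullet)}(Y^\bullet)$. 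Both factors are nonzero objects of $\mathscr{C}$. Since $\mathscr{C}$ is a ring category with left duals, Lemma \ref{lem:reduced} shows that this tensor product is nonzero.
\end{itemize}
Together these give $a(X^\bullet\widetilde{\otimes}Y^\bullet)=a(X^\bullet)+a(Y^\bullet)$. In particular $X^\bullet\widetilde{\otimes}Y^\bullet$ is nonzero, so $a$ and $b$ of it are defined.

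The argument for $b$ is symmetric. For $n>b(X^\bullet)+b(Y^\bullet)$, every pair $(p,q)$ with $p+q=n$ has $p>b(X^\bullet)$ or $q>b(Y^\bullet)$, so all summands vanish. At $n=b(X^\bullet)+b(Y^\bullet)$ only the summand $H^{b(X^\bullet)}(X^\bullet)\otimes H^{b(Y^\bullet)}(Y^\bullet)$ survives, and it is nonzero by Lemma \ref{lem:reduced}. Hence $b(X^\bullet\widetilde{\otimes}Y^\bullet)=b(X^\bullet)+b(Y^\bullet)$.

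The argument is essentially routine. The only points needing care are two hypotheses. First, Lemma \ref{lem:Hn=HiHj} is stated for multiring categories, and a ring category is a special case. Second, the non-vanishing of the extremal summand uses the hypothesis of left duals through Lemma \ref{lem:reduced}; without it, $H^a\otimes H^a$ could be zero.
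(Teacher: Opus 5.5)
Your proof is correct and takes the same approach as the paper, which simply cites Lemmas \ref{lem:reduced} and \ref{lem:Hn=HiHj}. You have spelled out the extremal-degree argument that the paper leaves implicit: vanishing beyond the bounds, and a single surviving summand at the bounds that is nonzero by Lemma \ref{lem:reduced}.
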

\begin{proof}
This is an immediate consequence of Lemmas \ref{lem:reduced} and \ref{lem:Hn=HiHj}.
\end{proof}

A \textit{monoidal triangulated functor} is a $\k$-linear functor that is both triangulated and monoidal (as defined in \cite[Definition 2.4.1]{EGNO15}). A monoidal triangulated functor is said to be an \textit{equivalence} if it is an equivalence of ordinary categories.
Let $\mathscr{C}$ and $\mathscr{D}$ be multiring categories, and let $\mathbf{D}^{b}(\mathscr{C})$ and $\mathbf{D}^{b}(\mathscr{D})$ be monoidal derived categories as in Example \ref{ex:Db(C)}.
Following \cite[Remark 2.5]{XZ26}, monoidal triangulated equivalences between $\mathbf{D}^{b}(\mathscr{C})$ and $\mathbf{D}^{b}(\mathscr{D})$ will simply be called \textit{monoidal derived equivalences}.

\section{Tensor equivalences from simple-shifted monoidal derived equivalences}\label{section3}
In this section, we first introduce the definition of a simple-shifted equivalence between bounded derived categories, and prove that every simple-shifted monoidal derived equivalence between monoidal derived categories gives rise to a tensor equivalence in the sense of \cite[Definition 4.2.5]{EGNO15}.

\begin{definition}\label{def:simple-shifted}
Let $\mathscr{A}$ and $\mathscr{B}$ be abelian categories. A simple-shifted functor from $\mathbf{D}^{b}(\mathscr{A})$ to $\mathbf{D}^{b}(\mathscr{B})$ is a triple $(F,\psi,r)$ consisting of a triangulated functor $F:\mathbf{D}^{b}(\mathscr{A}) \rightarrow \mathbf{D}^{b}(\mathscr{B})$, a set map $\psi:\mathcal{S}_{\mathscr{A}}\rightarrow \mathcal{S}_{\mathscr{B}}$, and a set map $r:\mathcal{S}_{\mathscr{A}}\rightarrow \mathbb{Z}$, subject to the condition $F(S)\cong \psi(S)[r(S)]$ for all $S\in \mathcal{S}_{\mathscr{A}}$. We call $(F,\psi,r)$ an equivalence if $F$ is an equivalence of triangulated categories.
\end{definition}

In what follows, let $\mathscr{A}$ and $\mathscr{B}$ be abelian categories, and let $(F,\psi,r)$ be a simple-shifted equivalence from $\mathbf{D}^{b}(\mathscr{A})$ to $\mathbf{D}^{b}(\mathscr{B})$.
The following lemma shows that the equivalence of $(F,\psi,r)$ implies the bijectivity of $\psi$.
\begin{lemma}\label{lem:psibijective}
We have that $\psi:\mathcal{S}_{\mathscr{A}}\rightarrow \mathcal{S}_{\mathscr{B}}$ is bijective.
\end{lemma}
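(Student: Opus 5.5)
We need to show $\psi$ is injective and surjective, using that $F$ is a triangulated equivalence and that all objects have finite length.

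\emph{Injectivity.} Suppose $\psi(S)=\psi(S')$. Then
$$F(S)\cong \psi(S)[r(S)] \quad\text{and}\quad F(S')\cong \psi(S)[r(S')],$$
so $F(S')\cong F(S)[r(S')-r(S)]\cong F(S[r(S')-r(S)])$. Since $F$ is an equivalence, $S'\cong S[r(S')-r(S)]$ in $\mathbf{D}^b(\mathscr{A})$. Taking cohomology, $S'$ is concentrated in degree $0$, while $S[k]$ is concentrated in degree $-k$. Hence $r(S')=r(S)$ and $S'\cong S$ (the stalk embedding $\mathscr{A}\to\mathbf{D}^b(\mathscr{A})$ is fully faithful). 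So $\psi$ is injective.

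\emph{Surjectivity.} This is the main step. Let $T\in\mathcal{S}_{\mathscr{B}}$ and choose $X\in\mathbf{D}^b(\mathscr{A})$ with $F(X)\cong T$. I will show that $X$ is built from shifted simples, and then use $\operatorname{Hom}$ computations to force $X$ to be a single shifted simple.

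First, every object of $\mathbf{D}^b(\mathscr{A})$ lies in the triangulated (extension-closed) subcategory generated by the shifts of simple objects. This follows from the truncation triangles together with the finite length of each cohomology object. Applying $F$, the object $T$ lies in the extension closure of the objects $\psi(S)[n]$ with $S\in\mathcal{S}_{\mathscr{A}}$ and $n\in\mathbb{Z}$.

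Now let $\mathscr{E}\subseteq\mathscr{B}$ be the Serre subcategory of objects whose composition factors lie in $\psi(\mathcal{S}_{\mathscr{A}})$. The full subcategory of $\mathbf{D}^b(\mathscr{B})$ consisting of complexes all of whose cohomologies lie in $\mathscr{E}$ is triangulated. This uses the long exact cohomology sequence and the fact that a Serre subcategory is closed under subobjects, quotients and extensions. This subcategory contains every $\psi(S)[n]$, so it contains $T$. Hence $T$, being simple in degree $0$, lies in $\mathscr{E}$, and so $T\cong\psi(S)$ for some $S$.

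The main obstacle is making the "generated by shifted simples" argument precise. I would do this by induction on the total length $\sum_i \ell(H^i(X))$, using truncation triangles and filtrations by composition series in each degree.

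An alternative for surjectivity, if needed, is to consider the minimal $a=a(X)$ with $H^a(X)\neq0$ and pick a simple $S\subseteq H^{a}(X)$, which gives a nonzero morphism $S[-a]\to X$. Applying $F$ yields a nonzero morphism $\psi(S)[r(S)-a]\to T$, which forces $r(S)=a$ and $\psi(S)\cong T$ by simplicity. The point to check in this route is the nonvanishing of $\operatorname{Hom}(S[-a],X)$, which follows from $\operatorname{Hom}(S[-a],X)\cong\operatorname{Hom}(S,H^a(X))$ via $\tau_{\le a}X\cong H^a(X)[-a]$. Since this argument is cleaner, I would present it as the main proof.
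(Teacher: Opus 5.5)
Your injectivity argument and your first surjectivity argument are correct, and they are essentially the paper's proof. The paper proves, by induction on length and then via the truncation triangle $\tau_{\leq m-1}X^\bullet\to X^\bullet\to H^m(X^\bullet)[-m]\to\tau_{\leq m-1}X^\bullet[1]$, that every cohomology object of $F(X^\bullet)$ has all its composition factors in $\psi(\mathcal{S}_{\mathscr{A}})$. That is exactly your statement that complexes with cohomology in the Serre subcategory $\mathscr{E}$ form a triangulated subcategory containing $F$ of every shifted simple. You should present that route as the main proof.

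The ``cleaner'' alternative you say you would present instead has a genuine gap. It is the step claiming that a nonzero morphism $\psi(S)[r(S)-a]\to T$ forces $r(S)=a$. We have $\operatorname{Hom}_{\mathbf{D}^b(\mathscr{B})}(\psi(S)[r(S)-a],T)\cong\operatorname{Ext}^{a-r(S)}_{\mathscr{B}}(\psi(S),T)$. The vanishing of negative Ext therefore only rules out $r(S)>a$.

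If $r(S)<a$, your morphism is merely a nonzero higher extension class between two simple objects of $\mathscr{B}$. That says nothing about whether $\psi(S)\cong T$, and a bare triangulated equivalence gives you nothing to exclude this case. The equality $r(S)=a$ does hold a posteriori: once the lemma is known, $X\cong S_0[-r(S_0)]$ for some simple $S_0$. But proving it requires the generation argument of your first route.

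Compare the proof of Theorem~\ref{thm:Cgamma}. There a Hom argument of this shape succeeds only because duals or invertible objects provide a target in a degree where all morphisms vanish.
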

\begin{proof}
Suppose that $\psi(S)=\psi(T)$ for some $S,T\in\mathcal{S}_{\mathscr{A}}$. Then we have
\begin{equation*}
F(T[r(S)-r(T)]) \cong F(T)   [r(S)-r(T)]\cong \psi(T)[r(S)] \cong F(S).
\end{equation*}
Since $F$ is an equivalence, it follows that $$T[r(S)-r(T)]\cong S$$ in $\mathbf{D}^{b}(\mathscr{A})$. This means that $r(S)=r(T)$ and thus $\psi$ is injective. We define $$\mathcal{R}=\{\psi (S)\mid S\in \mathcal{S}_{\mathscr{A}}\}.$$ Suppose, on the contrary, that $\psi$ is not surjective; that is, there exists some $U\in\mathcal{S}_{\mathscr{B}}$ such that
$
U\notin \mathcal{R}.
$ By the density of $F$, there exists $X^\bullet\in \mathbf{D}^{b}(\mathscr{A})$ such that $F(X^\bullet)\cong U.$ Let $$0\rightarrow S_1\rightarrow M \rightarrow S_2\rightarrow 0$$ be a short exact sequence in $\mathscr{A}$, where $S_1, S_2\in\mathcal{S}_{\mathscr{A}}$. Then  $$ S_1\rightarrow M \rightarrow S_2\rightarrow S_1[1]$$ is a distinguished triangle in $\mathbf{D}^b(\mathscr{A})$. We have a distinguished triangle $$ F(S_1)\rightarrow F(M) \rightarrow F(S_2)\rightarrow F(S_1)[1]$$ in $\mathbf{D}^b(\mathscr{B})$. The long exact sequence of cohomology objects corresponding to the latter distinguished triangle reads as follows:
$$
\cdots\rightarrow H^n(F(S_1))\rightarrow H^n(F(M))\rightarrow H^n(F(S_2))\rightarrow  H^{n+1}(F(S_1))\rightarrow\cdots.
$$
Hence, for every $n\in\mathbb{Z}$ with $H^n(F(M))\neq0$, the composition factors of $H^n(F(M))$ lie in the set $\mathcal{R}$. By induction on the length of objects, one can prove that, for every $N\in\mathscr{A}$ and every $n\in\mathbb{Z}$ with $H^n(F(N))\neq0$, the composition factors of $H^n(F(N))$ lie in the set $\mathcal{R}$. We may consider the following distinguished triangle:
$$
\tau_{\leq m-1} X^{\bullet} \rightarrow X^{\bullet} \rightarrow H^m(X^{\bullet})[-m] \rightarrow \tau_{\leq m-1}X^{\bullet}[1],
$$
where $m=b(X^\bullet).$
Then we have the following distinguished triangle:
$$
F(\tau_{\leq m-1} X^{\bullet}) \rightarrow F(X^{\bullet}) \rightarrow F(H^m(X^{\bullet})[-m]) \rightarrow F(\tau_{\leq m-1}X^{\bullet})[1].
$$
The long exact sequence of cohomology objects corresponding to the above distinguished triangle shows that the composition factors of $H^n(F(X^\bullet))$ arise from those of $H^n(F(\tau_{\leq m-1}X^\bullet))$ and $H^n(F(H^m(X^\bullet)[-m]))$. Note that for every $n\in\mathbb{Z}$ with $H^n(F(H^m(X^{\bullet})[-m]))\neq0$, the composition factors of $H^n(F(H^m(X^{\bullet})[-m]))$ lie in the set $\mathcal{R}$.
By induction, we can prove that, for every $n\in\mathbb{Z}$ with $H^n(F(X^{\bullet}))\neq0$, the composition factors of $H^n(F(X^{\bullet}))$ lie in the set $\mathcal{R}$. This is a contradiction. Thus, $\psi$ is bijective.
\end{proof}

Since Lemma \ref{lem:psibijective} implies that $\psi$ is bijective, we may define
\begin{equation}
s: \mathcal{S}_{\mathscr{B}}   \rightarrow \mathbb{Z},\quad U\mapsto r(\psi^{-1}(U)).
\end{equation}
It follows that $(F^{-1}, \psi^{-1}, -s)$ is also a simple-shifted derived equivalence.
Let $M,N\in\mathscr{A}$. By \cite[Proposition 3.2.2]{Ver96}
, we have
\begin{equation}\label{eq:ext=homDb}
\operatorname{Ext}_{\mathscr{A}}^k(M, N)   \cong \operatorname{Hom}_{\mathbf{D}^b(\mathscr{A})}(M,N[k]), \quad k\geq0.
\end{equation}
With the aid of this formula, we can prove the following lemma.

\begin{lemma}\label{lem:Ext1=Exti}
If $\operatorname{Ext}_{\mathcal{B}}^{1}(U,V)=0$ holds for all $U,V\in\mathcal{S}_{\mathcal{B}}$ with $s(U)<s(V)$, then $\operatorname{Ext}_{\mathcal{B}}^{k}(U,V)=0$ holds for all $U,V\in\mathcal{S}_{\mathcal{B}}$ with $s(U)<s(V)$ and for all $k\geq 1$.
\end{lemma}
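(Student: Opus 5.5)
The plan is to prove a stronger statement by induction on $k$ that involves arbitrary objects rather than only simples; the derived equivalence enters only through the function $s$. Fix an integer $c$. Call an object $M\in\mathscr{B}$ \emph{$c$-low} if every composition factor $W$ of $M$ satisfies $s(W)<c$. Call it \emph{$c$-high} if every composition factor satisfies $s(W)\geq c$. We prove the following claim $P(k)$ for all $k\geq 1$:
\begin{center}
$\operatorname{Ext}^k_{\mathscr{B}}(L,H)=0$ for every integer $c$, every $c$-low $L$ and every $c$-high $H$.
\end{center}
Taking $c=s(V)$, $L=U$ and $H=V$ then gives the lemma.

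\textbf{Base case $P(1)$.} This follows from the hypothesis by dévissage on lengths. If $0\to L_1\to L\to L_2\to 0$ is exact, the long exact sequence contains the segment $\operatorname{Ext}^1(L_2,H)\to\operatorname{Ext}^1(L,H)\to\operatorname{Ext}^1(L_1,H)$. So vanishing for $L_1,L_2$ gives vanishing for $L$, and the same argument applies in the second variable. Since simples $U,V$ with $U$ $c$-low and $V$ $c$-high satisfy $s(U)<c\leq s(V)$, the hypothesis covers them.

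\textbf{Reordering step.} Next we show that every object $K$ fits into a short exact sequence $0\to K_{\mathrm{low}}\to K\to K_{\mathrm{high}}\to 0$ with $K_{\mathrm{low}}$ $c$-low and $K_{\mathrm{high}}$ $c$-high. Start from any composition series of $K$. Suppose two adjacent layers have a high factor below a low one, i.e.\ there is a subquotient $0\to W_h\to X\to W_l\to 0$ with $W_l$ low and $W_h$ high. By $P(1)$ this sequence splits, so $X\cong W_h\oplus W_l$. Lifting a copy of $W_l$ through the correspondence theorem gives a new composition series in which the two factors are swapped. Repeating these swaps (a bubble sort) moves all low factors below all high ones, and the resulting filtration step gives the desired sequence. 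This step is the only place where some care is needed; it amounts to checking that a split subquotient lets one replace the intermediate term of the series.

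\textbf{Inductive step.} Let $k\geq 2$ and assume $P(k-1)$. Take $\xi\in\operatorname{Ext}^k(L,H)$, using \eqref{eq:ext=homDb} or Yoneda Ext. Represent $\xi$ by an exact sequence $0\to H\to E_1\to\cdots\to E_k\to L\to 0$ and cut it at $K=\operatorname{coker}(H\to E_1)$. This writes $\xi=\eta\circ\zeta$ with $\zeta\in\operatorname{Ext}^{k-1}(L,K)$ and $\eta\in\operatorname{Ext}^1(K,H)$. Choose the sequence $0\to K_{\mathrm{low}}\xrightarrow{\iota} K\xrightarrow{\pi} K_{\mathrm{high}}\to 0$ from the reordering step. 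The exact segment
$$\operatorname{Ext}^1(K_{\mathrm{high}},H)\xrightarrow{\pi^*}\operatorname{Ext}^1(K,H)\xrightarrow{\iota^*}\operatorname{Ext}^1(K_{\mathrm{low}},H)=0,$$
where the last group vanishes by $P(1)$, shows that $\eta=\eta'\circ\pi$ for some $\eta'\in\operatorname{Ext}^1(K_{\mathrm{high}},H)$. Hence
$$\xi=\eta'\circ(\pi\circ\zeta).$$
Here $\pi\circ\zeta$ lies in $\operatorname{Ext}^{k-1}(L,K_{\mathrm{high}})$, which is zero by $P(k-1)$. Therefore $\xi=0$, which proves $P(k)$ and completes the induction.
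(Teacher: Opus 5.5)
Your proof is correct, and after the first step it takes a more elementary route than the paper.

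\textbf{Where the two proofs agree.} Your base case $P(1)$ is exactly the paper's claim that $\operatorname{Ext}^1_{\mathscr{B}}(M,N)=0$ for $M\in\mathcal{B}^{\leq s(U)}$ and $N\in\mathcal{B}^{>s(U)}$. The paper also proves it by induction on length via the long exact Ext sequence \cite[Theorem 3.1]{Buc59}.

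\textbf{How the paper continues.} It does not argue degree by degree. It cites \cite{BB24} to say that $\mathcal{B}^{\leq s(U)}$ is a right admissible Serre subcategory with $(\mathcal{B}^{\leq s(U)})^{\perp}=\mathcal{B}^{>s(U)}$. It then uses \cite[Theorem 2.25]{BB24} to conclude that $\mathbf{D}^b(\mathcal{B}^{\leq s(U)})$ is orthogonal to $\mathbf{D}^b(\mathcal{B}^{>s(U)})$ inside $\mathbf{D}^b(\mathscr{B})$. The vanishing of every $\operatorname{Hom}(U,V[k])$ is read off from that.

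\textbf{How your argument differs.} Your reordering step is the concrete content of that right admissibility: every object has a low subobject with high quotient. Your Yoneda-splicing induction replaces the appeal to the semi-orthogonal decomposition theorem. You factor $\xi$ through $\operatorname{Ext}^1(K,H)$, push $\eta$ onto $K_{\mathrm{high}}$ using $P(1)$, and then kill $\pi\circ\zeta$ by $P(k-1)$.

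\textbf{What each route buys.} The paper's route gives a structural statement, namely an orthogonal pair of derived subcategories of $\mathbf{D}^b(\mathscr{B})$, at the price of external citations. Yours is self-contained: it uses only the long exact sequence and associativity of Yoneda products. It also proves vanishing for arbitrary low and high objects rather than just simples. Finally, it never needs the vanishing of $\operatorname{Hom}$ from low to high objects.
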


\begin{proof}
Let $\mathcal{B}^{\leq s(U)}$ and $\mathcal{B}^{> s(U)}$ be the full subcategories of $\mathcal{B}$ defined by
\begin{equation*}
\begin{aligned}
\mathcal{B}^{\leq s(U)}:=\{X\in\mathcal{B}\mid s(W)\leq s(U)\text{ for every composition factor }W\text{ of }X\},\\
\mathcal{B}^{> s(U)}:=\{X\in\mathcal{B}\mid s(W)>s(U)\text{ for every composition factor }W\text{ of }X\}.
\end{aligned}
\end{equation*}
Clearly, $\mathcal{B}^{\leq s(U)}$ and $\mathcal{B}^{> s(U)}$ are Serre subcategory, and we have $$\operatorname{Hom}_{_{\mathcal{B}}}(\mathcal{B}^{\leq s(U)}, \mathcal{B}^{> s(U)})=0.$$
We claim that $\operatorname{Ext}^{1}(M,N)=0$ for all $M\in \mathcal{B}^{\leq s(U)}$ and all $N\in \mathcal{B}^{> s(U)}$. Indeed, this can be shown by induction on the length of the objects on both sides separately, and then by applying \cite[Theorem 3.1]{Buc59}. According to \cite[Lemma 2.7]{BB24}, $\mathcal{B}^{\leq s(U)}$ is right admissible in the sense of \cite[Definition 2.5]{BB24} and $(\mathcal{B}^{\leq s(U)})^\bot = \mathcal{B}^{> s(U)},$
where
$$
( \mathcal{B}^{\leq s(U)})^\bot:=\{X\in \mathscr{B}\mid \operatorname{Hom}_{\mathscr{B}}(\mathcal{B}^{\leq s(U)},X)=0=\operatorname{Ext}^1_{\mathscr{B}}(\mathcal{B}^{\leq s(U)},X)\}.
$$
Then by \cite[Proposition 2.6]{BB24}, $\mathcal{B}^{> s(U)}$ is left admissible in the sense of \cite[Definition 2.5]{BB24}. By \cite[Theorem 2.25]{BB24}, $\mathbf{D}^b(\mathcal{B}^{\leq s(U)})$ is left orthogonal to $\mathbf{D}^b(\mathcal{B}^{> s(U)})$ (for the definition, see \cite[Subsection 3]{Bon89}
).
It follows from (\ref{eq:ext=homDb}) that $$\operatorname{Ext}^{k}_{\mathscr{B}}(U,V)\cong \operatorname{Hom}_{\mathbf{D}^b(\mathscr{B})}(U, V[k])=0$$ for all $k\geq 0.$
\end{proof}
Note that the above proof actually remains valid if $s$ is replaced by an arbitrary map $t:\mathcal{S}_\mathscr{B}\rightarrow\mathbb{Z}$.

\begin{lemma}\label{lem:ext1=0}
For any $S,T\in\mathcal{S}_{\mathscr{A}}$, if $r(S)\neq r(T)$, then $\operatorname{Ext}_{\mathscr{A}}^1(S,T)=0.$ Moreover, for any $U, V\in \mathcal{S}_\mathscr{B}$, if $s(U)\neq s(V)$, then $\operatorname{Ext}_{\mathscr{B}}^1(U,V)=0.$
\end{lemma}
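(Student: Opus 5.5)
Since $F$ is fully faithful and triangulated, the plan is to move the Ext group through $F$ and compare shifts of simple objects on the other side. By (\ref{eq:ext=homDb}),
$$\operatorname{Ext}^1_{\mathscr{A}}(S,T)\cong \operatorname{Hom}_{\mathbf{D}^b(\mathscr{A})}(S,T[1])\cong \operatorname{Hom}_{\mathbf{D}^b(\mathscr{B})}(F(S),F(T)[1])\cong \operatorname{Hom}_{\mathbf{D}^b(\mathscr{B})}\bigl(\psi(S)[r(S)],\psi(T)[r(T)+1]\bigr).$$
Write $U=\psi(S)$, $V=\psi(T)$ and $d=r(T)-r(S)+1$. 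The last group is then $\operatorname{Hom}_{\mathbf{D}^b(\mathscr{B})}(U,V[d])$, with $U,V$ stalk complexes of simple objects of $\mathscr{B}$.

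We split into two cases. If $r(S)>r(T)$, then $d\leq 0$. If $d<0$, the group vanishes because negative Ext groups between objects of the heart vanish. If $d=0$, then $r(S)=r(T)+1$ and the group is $\operatorname{Hom}_{\mathscr{B}}(U,V)$. This is nonzero only if $U\cong V$, and then injectivity of $\psi$ (Lemma \ref{lem:psibijective}) gives $S\cong T$, which contradicts $r(S)\neq r(T)$. So in this case $\operatorname{Ext}^1_{\mathscr{A}}(S,T)=0$.

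The hard case is $r(S)<r(T)$, where $d\geq 2$ and $\operatorname{Ext}^d_{\mathscr{B}}(U,V)$ need not vanish a priori. Here I would use the inverse equivalence symmetrically: $\operatorname{Ext}^1_{\mathscr{A}}(S,T)$ is the group of extensions $0\to T\to E\to S\to 0$. Applying $F$, a nonzero class yields a nonsplit triangle $V[r(T)]\to F(E)\to U[r(S)]\to V[r(T)+1]$. Alternatively, use $\operatorname{Ext}^1_{\mathscr{A}}(S,T)\cong \operatorname{Hom}(T[-1],S)$ viewed through $F^{-1}$ applied to objects of $\mathscr{B}$. The cleaner approach is to argue first for the ``easy direction'' on both sides. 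The argument above shows:
\begin{itemize}
\item $\operatorname{Ext}^1_{\mathscr{A}}(S,T)=0$ whenever $r(S)>r(T)$;
\item applying the same argument to the simple-shifted equivalence $(F^{-1},\psi^{-1},-s)$, $\operatorname{Ext}^1_{\mathscr{B}}(U,V)=0$ whenever $-s(U)>-s(V)$, i.e.\ whenever $s(U)<s(V)$.
\end{itemize}
The second statement is exactly the hypothesis of Lemma \ref{lem:Ext1=Exti}. That lemma then gives $\operatorname{Ext}^k_{\mathscr{B}}(U,V)=0$ for all $k\geq1$ whenever $s(U)<s(V)$. In the remaining case $r(S)<r(T)$, we have $s(U)=r(S)<r(T)=s(V)$ and $d\geq 2$, so $\operatorname{Hom}(U,V[d])=\operatorname{Ext}^d_{\mathscr{B}}(U,V)=0$. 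Hence $\operatorname{Ext}^1_{\mathscr{A}}(S,T)=0$ in all cases with $r(S)\neq r(T)$.

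The statement for $\mathscr{B}$ follows by symmetry. Run the same argument for $(F^{-1},\psi^{-1},-s)$, using the remark after Lemma \ref{lem:Ext1=Exti} that the lemma holds for an arbitrary map $t$; applied to $\mathscr{A}$ with $t=r$, it supplies the higher-Ext vanishing needed in the hard case. The main obstacle is the case $r(S)<r(T)$, which needs higher Ext groups. The key idea is to feed the easy-direction vanishing of the \emph{other} side into Lemma \ref{lem:Ext1=Exti}, making sure the sign conventions between $r$, $s$ and $-s$ line up correctly.
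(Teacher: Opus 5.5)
Your proposal is correct and matches the paper's proof. You treat $d\le 0$ via the $t$-structure and injectivity of $\psi$. For $d\ge 2$ you get $\operatorname{Ext}^1_{\mathscr{B}}(U,V)=0$ whenever $s(U)<s(V)$ from the easy case for $(F^{-1},\psi^{-1},-s)$, then apply Lemma \ref{lem:Ext1=Exti}, and the $\mathscr{B}$-statement follows by symmetry.

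One small slip: in the symmetric step the auxiliary map on $\mathcal{S}_{\mathscr{A}}$ should be $t=-r$ (the ``$s$'' of $(F^{-1},\psi^{-1},-s)$), not $t=r$, because there you need higher Ext vanishing for pairs with $r(S)>r(T)$.
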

\begin{proof}
By (\ref{eq:ext=homDb}), we have
\begin{equation*}
    \begin{aligned}
\operatorname{Ext}_{\mathscr{A}}^1(S,T)
&\cong \operatorname{Hom}_{\mathbf{D}^b(\mathscr{A})}(S, T[1])\\
&\cong \operatorname{Hom}_{\mathbf{D}^b(\mathscr{B})}(F(S), F(T)[1])\\
&\cong \operatorname{Hom}_{\mathbf{D}^b(\mathscr{B})} (\psi(S), \psi(T)[1+r(T)-r(S)]).
    \end{aligned}
\end{equation*}
If $1+r(T)-r(S)=0$, then we have
\begin{equation*}
\operatorname{Hom}_{\mathbf{D}^b(\mathscr{B})} (\psi(S), \psi(T))   \cong \operatorname{Hom}_{\mathscr{B}}(\psi(S), \psi(T)).
\end{equation*}
By Lemma \ref{lem:psibijective}, $\psi$ is bijective, so we have $\psi(S)\ncong \psi(T)$, and hence $$\operatorname{Ext}^1_{\mathscr{A}}(S,T)= 0.$$
If $1+r(T)-r(S)\leq -1$, then, by the fact that $(\mathbf{D}^{\leq 0}_{\mathscr{B}},\mathbf{D}^{\geq 0}_{\mathscr{B}})$ constitutes a standard t-structure, we conclude that $$\operatorname{Ext}^1_{\mathscr{A}}(S,T)=0.$$
Next, we deal with the case $1+r(T)-r(S)\geq 2$. In this case, (\ref{eq:ext=homDb}) implies that
\begin{equation*}
    \begin{aligned}
\operatorname{Hom}_{\mathbf{D}^b(\mathscr{B})} (\psi(S), \psi(T)[1+r(T)-r(S)])
\cong \operatorname{Ext}_{\mathscr{B}}^{1+r(T)-r(S)}(\psi(S), \psi(T)).
    \end{aligned}
\end{equation*}
Since $1-r(T)+r(S)\leq 0$ and $(F^{-1}, \psi^{-1}, -s)$ is also a simple-shifted derived equivalence, a similar argument shows that
$$
\operatorname{Ext}_{\mathscr{B}}^{1}(\psi(S), \psi(T))\cong \operatorname{Hom}_{\mathbf{D}^b(\mathscr{A})} (S, T[1-r(T)+r(S)])=0.
$$
By Lemma \ref{lem:Ext1=Exti}, we have
$$
\operatorname{Ext}^1_{\mathscr{A}}(S,T)\cong \operatorname{Ext}_{\mathscr{B}}^{1+r(T)-r(S)}(\psi(S), \psi(T))=0.
$$
Using the fact that $(F^{-1}, \psi^{-1}, -s)$ is also a simple-shifted derived equivalence, we can obtain the remaining conclusions.
\end{proof}

Let $I:=\{r(S)\mid S\in\mathcal{S}_{\mathscr{A}}\}$.
For any $i\in I$, define $\mathscr{A}_i$ and $\mathscr{B}_i$ to be the full subcategories of $\mathscr{A}$ and $\mathscr{B}$, respectively, by
\begin{equation*}
\begin{aligned}
\mathscr{A}_i:=\{X\in\mathcal{A}\mid r(S)=i\text{ for every composition factor }S\text{ of }X\},\\
\mathscr{B}_i:=\{X\in\mathcal{B}\mid s(U)=i\text{ for every composition factor }U\text{ of }X\}.
\end{aligned}
\end{equation*}
For every $i\in I$, we know that $\mathscr{A}_i$ and $\mathscr{B}_i$ are Serre subcategory of $\mathscr{A}$ and $\mathscr{B}$, respectively. Lemma \ref{lem:ext1=0} implies that for any $S,T\in\mathcal{S}_{\mathscr{A}}$ with $\operatorname{Ext}_{\mathscr{A}}^1(S,T)\neq0$, we have $r(S)=r(T)$. This implies that $r$ is constant on any two simple objects of $\mathscr{A}$ that are linked in the sense of \cite[Exercise 1.5.10(ii)]{EGNO15}.
By \cite[Exercise 1.5.10(i)]{EGNO15}, there is a unique decomposition
$$
\mathscr{A}\cong \bigoplus_{\gamma\in \Gamma} \mathscr{A}^{\gamma},
$$
such that $\mathscr{A}^{\gamma}$ is indecomposable (see \cite[Definition 1.3.7]{EGNO15} for the definition). The categories $\mathscr{A}^{\gamma}$ are called the \textit{blocks} of $\mathscr{A}.$
For every $\gamma \in \Gamma$, \cite[Exercise 1.5.10(ii)]{EGNO15} implies that there exists some $i_{\gamma}\in I$ such that $\mathscr{A}^{\gamma}\subseteq \mathscr{A}_{i_\gamma}$.
It follows that
$$
\mathscr{A}\cong \bigoplus_{i\in I}\mathscr{A}_i,\quad \mathscr{B}\cong \bigoplus_{i\in I}\mathscr{B}_i.
$$

\begin{lemma}\label{lem:F(A_i)}
For any $i\in I$, let $\mathscr{B}_i[i]$ be the full subcategory of $\mathbf{D}^b(\mathscr{B})$ defined by
$$
\mathscr{B}_i[i]:=\{X^\bullet\in\mathbf{D}^b(\mathscr{B})\mid X^\bullet\cong W[i] \text{ for some }W\in \mathscr{B}_i\}.
$$
Then we have $F(\mathscr{A}_i)\subseteq \mathscr{B}_i[i].$
\end{lemma}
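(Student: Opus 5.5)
We argue by induction on the length $\ell(X)$ of $X\in\mathscr{A}_i$, and we prove a slightly stronger statement: $F(X)\cong W[i]$ for some $W\in\mathscr{B}_i$, that is, $F(X)$ has cohomology concentrated in degree $-i$ and $H^{-i}(F(X))\in\mathscr{B}_i$.

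For the base case $\ell(X)=1$, $X$ is simple, say $X=S$ with $r(S)=i$. Then $F(S)\cong\psi(S)[i]$. By the definition of $s$ we have $s(\psi(S))=r(S)=i$, so $\psi(S)\in\mathscr{B}_i$.

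For the inductive step, take $X\in\mathscr{A}_i$ of length at least $2$. Choose a short exact sequence $0\to X'\to X\to T\to 0$ in $\mathscr{A}$ with $T$ simple. Since $\mathscr{A}_i$ is a Serre subcategory, both $X'$ and $T$ lie in $\mathscr{A}_i$.
\begin{itemize}
\item By induction, $F(X')\cong W'[i]$ and $F(T)\cong W''[i]$ with $W',W''\in\mathscr{B}_i$.
\item Applying $F$ to the triangle $X'\to X\to T\to X'[1]$ gives a distinguished triangle $W'[i]\to F(X)\to W''[i]\to W'[i+1]$.
\item The long exact cohomology sequence shows $H^n(F(X))=0$ for $n\neq -i$, together with a short exact sequence $0\to W'\to H^{-i}(F(X))\to W''\to 0$. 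The potential connecting maps vanish because both outer terms are concentrated in the single degree $-i$.
\end{itemize}
A bounded complex with cohomology concentrated in one degree $-i$ is isomorphic to $H^{-i}[i]$ in $\mathbf{D}^b(\mathscr{B})$ via truncation. So $F(X)\cong H^{-i}(F(X))[i]$. Since $\mathscr{B}_i$ is a Serre subcategory, hence closed under extensions, $H^{-i}(F(X))\in\mathscr{B}_i$.

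An alternative for the last step is to note that $F(X)$ is an extension of $W''[i]$ by $W'[i]$ in $\mathscr{B}_i[i]$, which is extension-closed in $\mathbf{D}^b(\mathscr{B})$ because it is a shifted heart restricted to a Serre subcategory. The cohomology argument is cleaner, so we use it.

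There is no serious obstacle. The only points needing care are that simple objects of $\mathscr{A}_i$ are exactly the $S$ with $r(S)=i$, and that $\psi$ sends them into $\mathscr{B}_i$. The latter uses the identity $s\circ\psi=r$, which follows from the bijectivity of $\psi$ (Lemma \ref{lem:psibijective}) and the definition of $s$.
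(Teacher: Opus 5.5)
Your proposal is correct and follows the paper's proof essentially step for step. Both argue by induction on length, take a short exact sequence with simple quotient, apply $F$ to the associated triangle, and read off from the long exact cohomology sequence that $F(X)$ is concentrated in degree $-i$, with $H^{-i}(F(X))$ an extension of objects of $\mathscr{B}_i$. Your explicit base-case check that $s(\psi(S))=r(S)=i$ simply fills in the detail the paper calls clear.
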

\begin{proof}
We prove this by induction on the length of objects in $\mathscr{A}_i$. Clearly, the assertion holds for simple objects. Assume that it holds for objects of length at most $n-1$. For an object $X$ of length $n$, we take a short exact sequence
$$
0 \rightarrow Y \rightarrow X \rightarrow S \rightarrow 0,
$$
where $S$ is a simple object. The induction hypothesis implies that there exists $Y^\prime \in \mathscr{B}_i$ such that
$F(Y)\cong Y^\prime[i]$ in $\mathbf{D}^b(\mathscr{B})$.
Consider the following distinguished triangle
\begin{equation*}
Y \rightarrow X \rightarrow S \rightarrow Y[1].
\end{equation*}
Applying $ F$ to the above distinguished triangle, we obtain
\begin{equation*}
Y^\prime[i] \rightarrow F(X) \rightarrow \psi(S)[i] \rightarrow Y^\prime[i+1].
\end{equation*}
The long exact sequence of cohomology objects associated with this distinguished triangle is as follows:
\begin{equation*}
\cdots\rightarrow 0\rightarrow Y^\prime \rightarrow H^{-i}(F(X)) \rightarrow \psi(S) \rightarrow 0 \rightarrow \cdots   .
\end{equation*}
It follows that $H^{-i}(F(X))\in \mathscr{B}_i,$ and that $F(X)\cong H^{-i}(F(X))[i].$
\end{proof}

Since $F(\mathscr{A}_i)\subseteq \mathscr{B}_i[i]$, for every $i\in I$, we may define a functor as follows:
\begin{equation*}
    G_i:\mathscr{A}_i\rightarrow \mathscr{B}_i,\quad X\mapsto H^{-i}(F(X)).
\end{equation*}
We then define $G:=\bigoplus_{i\in I}G_i$.

\begin{proposition}\label{prop:equ}
The functor $G$ is an equivalence between abelian categories $\mathscr{A}$ and $\mathscr{B}$.
\end{proposition}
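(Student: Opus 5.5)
We must show that $G$ is fully faithful and essentially surjective; since $\mathscr{A}\cong\bigoplus_{i\in I}\mathscr{A}_i$ and $\mathscr{B}\cong\bigoplus_{i\in I}\mathscr{B}_i$, it suffices to treat each $G_i$ separately. First, $G_i$ is a well-defined additive functor. The composite $\mathscr{A}_i\hookrightarrow \mathbf{D}^b(\mathscr{A})\xrightarrow{F}\mathbf{D}^b(\mathscr{B})\xrightarrow{H^{-i}}\mathscr{B}$ lands in $\mathscr{B}_i$ by Lemma \ref{lem:F(A_i)}. For $X\in\mathscr{A}_i$ we have a natural isomorphism $F(X)\cong G_i(X)[i]$. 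Indeed, for complexes concentrated in a single degree $-i$, the canonical truncation maps give a natural isomorphism $\tau_{\geq -i}\tau_{\leq -i}Y\cong H^{-i}(Y)[i]$, and these truncation maps are isomorphisms for every $Y\in\mathscr{B}_i[i]$.

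For full faithfulness, take $X,Y\in\mathscr{A}_i$. Then
$$\operatorname{Hom}_{\mathscr{A}}(X,Y)\cong\operatorname{Hom}_{\mathbf{D}^b(\mathscr{A})}(X,Y)\cong\operatorname{Hom}_{\mathbf{D}^b(\mathscr{B})}(F(X),F(Y)).$$
The last space is isomorphic to $\operatorname{Hom}_{\mathbf{D}^b(\mathscr{B})}(G_i(X)[i],G_i(Y)[i])\cong \operatorname{Hom}_{\mathscr{B}}(G_i(X),G_i(Y))$. Here we use that $\mathscr{B}\to\mathbf{D}^b(\mathscr{B})$ is fully faithful. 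Because of the naturality above, this composite bijection is exactly the map induced by $G_i$. Hence $G_i$, and therefore $G$, is fully faithful.

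For essential surjectivity, apply the same reasoning to the simple-shifted equivalence $(F^{-1},\psi^{-1},-s)$. The blocks on the $\mathscr{B}$ side are indexed by the same set $I$, and $-s$ takes the value $-i$ on $\mathscr{B}_i$. The analogue of Lemma \ref{lem:F(A_i)} for $F^{-1}$ then gives $F^{-1}(\mathscr{B}_i)\subseteq\mathscr{A}_i[-i]$. This analogue is proved by the same induction on length, starting from $F^{-1}(U)\cong\psi^{-1}(U)[-s(U)]$. Now take $W\in\mathscr{B}_i$ and write $F^{-1}(W)\cong X[-i]$ with $X\in\mathscr{A}_i$. Then $F(X)\cong F(F^{-1}(W)[i])\cong W[i]$, and therefore $G_i(X)=H^{-i}(F(X))\cong W$.

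Combining these steps, $G$ is an equivalence of abelian categories. The only delicate point is checking that the bijection on Hom-spaces really is the action of $G_i$. This is handled by the naturality of the truncation isomorphism, so I expect no serious obstacle. The essential surjectivity step relies on having the symmetric version of Lemma \ref{lem:F(A_i)} for $F^{-1}$. That version follows verbatim once we note that $\mathscr{B}_i$ is defined through $s$, and that $s$ corresponds to $r$ under the bijection $\psi$.
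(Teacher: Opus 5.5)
Your proposal is correct and follows the paper's proof. Full faithfulness comes from the chain of Hom-isomorphisms through $\mathbf{D}^b$, and density from the analogue of Lemma \ref{lem:F(A_i)} for $(F^{-1},\psi^{-1},-s)$. The paper also checks exactness of $G_i$ via the long exact cohomology sequence (quoted later in Proposition \ref{prop:tensorequ}); you omit this, but it is automatic for an equivalence of abelian categories, and your naturality check on the Hom-bijection makes explicit a point the paper leaves implicit.
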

\begin{proof}
Taking a short exact sequence $$0\rightarrow Y\rightarrow X\rightarrow Z\rightarrow 0$$ in $\mathscr{A}_i$, we obtain the distinguished triangle
$$
F(Y)\rightarrow F(X)\rightarrow F(Z)\rightarrow F(Y)[1].
$$
By Lemma \ref{lem:F(A_i)}, we obtain the following long exact sequence of cohomology objects:
$$
\cdots\rightarrow 0\rightarrow H^{-i}(F(Y))\rightarrow H^{-i}(F(X))\rightarrow H^{-i}(F(Z))\rightarrow 0 \rightarrow \cdots.
$$
It follows that $G_i$ is exact, and hence $G$ is exact.
Note that the embedding $\mathscr{B}\hookrightarrow \mathbf{D}^b(\mathscr{B})$ is fully faithful. For any $M, N\in \mathscr{A}_i$, we have
\begin{equation*}
    \begin{aligned}
       \operatorname{Hom}_{\mathscr{B}_i}(G_i(M), G_i(N))&\cong \operatorname{Hom}_{\mathbf{D}^b(\mathscr{B})}(G_i(M), G_i(N)) \\
       &\cong \operatorname{Hom}_{\mathbf{D}^b(\mathscr{B})}(H^{-i}(F(M))[i], H^{-i}(F(N))[i])\\
       &\cong \operatorname{Hom}_{\mathbf{D}^b(\mathscr{B})}(F(M), F(N))\\
       &\cong \operatorname{Hom}_{\mathbf{D}^b(\mathscr{A})}(M, N)\\
       &\cong \operatorname{Hom}_{\mathscr{A}}(M, N).
    \end{aligned}
\end{equation*}
Consequently, $G_i$ is fully faithful, and therefore so is $G$. By an argument similar to the proof of Lemma \ref{lem:F(A_i)}, we can prove that for every $M\in \mathscr{B}_i$, there exists some $N\in \mathscr{A}_i$ such that $F^{-1} (M) \cong N[-i].$ Thus, in $\mathbf{D}^b(\mathscr{B})$ we have
$$
M\cong F(F^{-1} (M))\cong F(N[-i])\cong G_i(N).
$$
We conclude that $G_i$ is dense, from which it follows that $G$ is dense.
\end{proof}

In what follows, we let $\mathscr{A}$ and $\mathscr{B}$ be multiring categories over $\k$. To avoid confusion with the abelian case, we relabel them as $\mathscr{C}$ and $\mathscr{D}$, respectively. Suppose that $(F,\psi,r)$ is a simple-shifted monoidal derived equivalence between $\mathbf{D}^b(\mathscr{C})$ and $\mathbf{D}^b(\mathscr{D})$. In other words, $(F,\psi,r)$ is a simple-shifted equivalence such that $(F,J)$ is a monoidal equivalence between monoidal derived categories, where $J_{X^\bullet,Y^\bullet}:F(X^\bullet)\widetilde{\otimes} F(Y^\bullet)\rightarrow F(X^\bullet\widetilde{\otimes} Y^\bullet)$ is a natural isomorphism for all $X^\bullet,Y^\bullet\in \mathbf{D}^b(\mathscr{C})$. See \cite[Definition 2.4.1]{EGNO15} for the definition of a monoidal functor.

\begin{lemma}\label{lem:CiCj<Cij}
For any $i,j\in I$, we have $\mathscr{C}_i\otimes \mathscr{C}_j\subseteq \mathscr{C}_{i+j}$ and $\mathscr{D}_i\otimes \mathscr{D}_j\subseteq \mathscr{D}_{i+j}$. In particular, $\mathbbm{1}_{\mathscr{C}}\in \mathscr{C}_0$ and $\mathbbm{1}_{\mathscr{D}}\in \mathscr{D}_0$.
\end{lemma}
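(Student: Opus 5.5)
By Lemma \ref{lem:F(A_i)}, every $X\in\mathscr{C}_i$ satisfies $F(X)\cong G_i(X)[i]$ with $G_i(X)\in\mathscr{D}_i$. The plan is to use the monoidal structure $J$ of $F$ to read off the shift of $F(X\otimes Y)$, and then invoke the decomposition $\mathscr{C}\cong\bigoplus_{i\in I}\mathscr{C}_i$.

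Take $X\in\mathscr{C}_i$ and $Y\in\mathscr{C}_j$. Since $\mathscr{C}_i$ and $\mathscr{C}_j$ are Serre subcategories, it suffices to treat nonzero $X$ and $Y$ (indeed the case where both are simple, by biexactness; but this is not needed). Via $J$ and the definition of $\widetilde{\otimes}$ on stalk complexes,
$$F(X\otimes Y)\cong F(X)\widetilde{\otimes}F(Y)\cong \bigl(G_i(X)\otimes G_j(Y)\bigr)[i+j],$$
so $F(X\otimes Y)$ is concentrated in the single degree $-(i+j)$. Next, write $X\otimes Y\cong\bigoplus_{k\in I}Z_k$ with $Z_k\in\mathscr{C}_k$, using the decomposition $\mathscr{C}\cong\bigoplus_{k\in I}\mathscr{C}_k$. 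Then $F(X\otimes Y)\cong\bigoplus_k G_k(Z_k)[k]$, where each $G_k(Z_k)$ is nonzero whenever $Z_k\neq 0$, since $G$ is an equivalence by Proposition \ref{prop:equ}. Comparing cohomology degrees, $Z_k=0$ for all $k\neq i+j$, and hence $X\otimes Y\in\mathscr{C}_{i+j}$. If $X\otimes Y=0$, the conclusion holds trivially. Note that no rigidity is needed here: we never require $G_i(X)\otimes G_j(Y)\neq0$. We only use that a nonzero $Z_k$ would contribute nonzero cohomology in degree $-k$, which forces $k=i+j$.

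The statement for $\mathscr{D}$ follows by the same argument applied to the simple-shifted monoidal equivalence $(F^{-1},\psi^{-1},-s)$. Here $F^{-1}$ inherits a monoidal structure from $(F,J)$, and the relevant decomposition is $\mathscr{D}\cong\bigoplus_i\mathscr{D}_i$ with $F^{-1}(\mathscr{D}_i)\subseteq\mathscr{C}_i[-i]$, as established in the proof of Proposition \ref{prop:equ}.

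For the unit, $F(\mathbbm{1}_{\mathscr{C}})\cong\mathbbm{1}_{\mathscr{D}}$, which is concentrated in degree $0$. Decompose $\mathbbm{1}_{\mathscr{C}}=\bigoplus_k U_k$ with $U_k\in\mathscr{C}_k$. The same degree comparison forces $U_k=0$ for $k\neq0$, so $\mathbbm{1}_{\mathscr{C}}\in\mathscr{C}_0$; in particular $0\in I$ if $\mathscr{C}\neq0$. Symmetrically, $\mathbbm{1}_{\mathscr{D}}\in\mathscr{D}_0$.

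The only mildly delicate step is the degree comparison. It needs the fact that a nonzero object of $\mathscr{C}_k$ has $F$-image with nonzero cohomology exactly in degree $-k$, which follows from Lemma \ref{lem:F(A_i)} together with the faithfulness of $F$. I do not expect a real obstacle.
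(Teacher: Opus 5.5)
Your proposal is correct and follows essentially the same route as the paper: compute $F(X\otimes Y)\cong (G_i(X)\otimes G_j(Y))[i+j]$ via $J$ and Lemma~\ref{lem:F(A_i)}, decompose $X\otimes Y\cong\bigoplus_k Z_k$ along $\mathscr{C}\cong\bigoplus_{k\in I}\mathscr{C}_k$, compare cohomological degrees, and treat $\mathscr{D}$ via the monoidal quasi-inverse $F^{-1}$. Your explicit argument for the unit, using $F(\mathbbm{1}_{\mathscr{C}})\cong\mathbbm{1}_{\mathscr{D}}$ and the same degree comparison, fills in a step that the paper leaves as ``in particular''.
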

\begin{proof}
By Lemma \ref{lem:F(A_i)}, for any $X\in\mathscr{C}_i$, $Y\in\mathscr{C}_j$, we have
$$
F(X\otimes Y)\cong F(X)\widetilde{\otimes} F(Y)\cong (H^{-i}F(X)\otimes H^{-j}F(Y))[i+j].
$$
Suppose that $X\otimes Y \cong \bigoplus_{k\in I} Z_k$ with $Z_k\in \mathscr{C}_k$. Then we have
$$
F(X\otimes Y)\cong \bigoplus_{k\in I} F(Z_k)\cong \bigoplus_{k\in I} H^{-k}(F(Z_k))[k].
$$
It follows that $X\otimes Y \in \mathscr{C}_{i+j}$. A similar argument shows that $\mathscr{D}_i\otimes \mathscr{D}_j\subseteq \mathscr{D}_{i+j}$.
\end{proof}

\begin{proposition}\label{prop:tensorequ}
The functor $G$ is a tensor equivalence between multiring categories $\mathscr{C}$ and $\mathscr{D}$.
\end{proposition}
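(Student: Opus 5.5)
The plan is to build a monoidal structure on $G=\bigoplus_{i\in I}G_i$ by transporting the monoidal structure $(F,J)$ through the cohomology functors. We already know from Proposition \ref{prop:equ} that $G$ is an exact, $\k$-linear equivalence of abelian categories. It therefore remains to give a natural isomorphism $G(X)\otimes G(Y)\cong G(X\otimes Y)$ and a unit isomorphism $G(\mathbbm{1}_{\mathscr{C}})\cong \mathbbm{1}_{\mathscr{D}}$, and then to check the coherence axioms. Since $\mathscr{C}\cong\bigoplus_{i\in I}\mathscr{C}_i$ and $\otimes$ is biadditive, it suffices to define these isomorphisms for $X\in\mathscr{C}_i$ and $Y\in\mathscr{C}_j$ and then extend additively. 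By Lemma \ref{lem:CiCj<Cij}, such a product lies in $\mathscr{C}_{i+j}$.

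\textbf{Step 1: the tensor structure map.} For $X\in\mathscr{C}_i$ and $Y\in\mathscr{C}_j$, Lemma \ref{lem:F(A_i)} gives $F(X)\cong G(X)[i]$ and $F(Y)\cong G(Y)[j]$. I take the composite
$$G(X)\otimes G(Y)\cong H^{-i-j}\big(G(X)[i]\,\widetilde{\otimes}\, G(Y)[j]\big)\cong H^{-i-j}\big(F(X)\widetilde{\otimes} F(Y)\big)\xrightarrow{H^{-i-j}(J_{X,Y})} H^{-i-j}(F(X\otimes Y))=G(X\otimes Y).$$
The first isomorphism is the Künneth isomorphism of Lemma \ref{lem:Hn=HiHj}, which here reduces to the canonical identification of a tensor product of two stalk complexes with a stalk complex. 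Each piece is natural in $X$ and $Y$, because $J$ is natural and because both $H^{-i-j}$ and the isomorphism of Lemma \ref{lem:Hn=HiHj} are natural. Hence the composite is a natural isomorphism.

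\textbf{Step 2: the unit.} By Lemma \ref{lem:CiCj<Cij}, $\mathbbm{1}_{\mathscr{C}}\in\mathscr{C}_0$. Since $F$ is monoidal, $F(\mathbbm{1}_{\mathscr{C}})\cong\mathbbm{1}_{\mathscr{D}}$, so $G(\mathbbm{1}_{\mathscr{C}})=H^0(F(\mathbbm{1}_{\mathscr{C}}))\cong \mathbbm{1}_{\mathscr{D}}$.

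\textbf{Step 3: coherence.} The pentagon-type compatibility with the associators and the two unit diagrams should follow by applying $H^{-i-j-k}$ to the corresponding diagrams for $(F,J)$, which commute by assumption. One must then check that the Künneth identifications are compatible with the associativity and unit constraints of $\widetilde{\otimes}$.

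I expect this last point to be the main obstacle, because of possible signs. The identification $W[i]\,\widetilde{\otimes}\, W'[j]\cong (W\otimes W')[i+j]$ might carry sign conventions, and a careless choice could make the associativity square commute only up to a sign such as $(-1)^{ij}$. My approach is to work at the level of complexes. For stalk complexes $W[i]$, $W'[j]$ and $W''[k]$, the tensor product in Example \ref{ex:Db(C)} has a single nonzero term $W\otimes W'\otimes W''$, placed in degree $-i-j-k$, and all differentials vanish. So I choose the identifications to be the identity on that single term; any sign appears only in differentials, which are zero. The associator of $\widetilde{\otimes}$ restricted to such complexes is then exactly the associator of $\mathscr{C}$ (respectively $\mathscr{D}$), and the unit constraints likewise agree. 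With these choices the coherence diagrams for $G$ are precisely the degree-$(-i-j-k)$ components of those for $(F,J)$.

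This makes $G$ a monoidal equivalence of multiring categories, and hence a tensor equivalence in the sense of \cite[Definition 4.2.5]{EGNO15}.
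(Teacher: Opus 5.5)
Your proposal is correct and is essentially the paper's proof. On top of Proposition~\ref{prop:equ}, the structure isomorphism is the degree $-i-j$ K\"unneth isomorphism of Lemma~\ref{lem:Hn=HiHj} followed by $H^{-i-j}(J_{X,Y})$, and your treatment of the unit and of coherence via stalk complexes (where $\widetilde{\otimes}$ and its associator carry no signs) fills in what the paper leaves as ``one may directly prove''. The one point to state explicitly is that the identifications $F(X)\cong G(X)[i]$ must be the canonical ones inducing the identity on $H^{-i}$, so that your Step~1 map is natural and coincides with the K\"unneth map. Alternatively, you can apply Lemma~\ref{lem:Hn=HiHj} directly to $F(X)\widetilde{\otimes}F(Y)$, as the paper does.
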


\begin{proof}
By Proposition \ref{prop:equ}, the functor $G$ is an exact equivalence.
For any $X\in\mathscr{C}_i$, $Y\in\mathscr{C}_j$, and $n\in\mathbb{Z}$, by Lemma \ref{lem:Hn=HiHj}, there is a natural isomorphism
$$
H^n(F(X)\widetilde{\otimes} F(Y))\cong \bigoplus_{p+q=n} H^p(F(X))\otimes H^q(F(Y)).
$$
It follows that
$$
 H^{-i-j}(F(X)\widetilde{\otimes} F(Y))\cong H^{-i} (F(X))\otimes H^{-j} (F(Y)) .
$$
Then we have
\begin{equation*}
\begin{aligned}
G_i(X)\otimes G_j(Y) &=H^{-i} (F(X))\otimes H^{-j} (F(Y))\\ &\cong H^{-i-j}(F(X)\widetilde{\otimes} F(Y))\\
&\cong H^{-i-j}(F(X\otimes Y))\\
&= G_{i+j}(X\otimes Y).
\end{aligned}
\end{equation*}
Hence, one may directly prove that $G$ is a tensor equivalence.
\end{proof}

Note that if $\mathscr{C}$ is a multitensor category with left duals, then, by \cite[Proposition 4.2.9]{EGNO15}, the left dualization functor is exact. This implies that if $S\in \mathcal{S}_{\mathscr{C}}$, then $S^*\in \mathcal{S}_{\mathscr{C}}$. Moreover, by \cite[Exercise 2.10.6]{EGNO15}, we have
$$
F(S^*)\cong F(S)^*\cong (\psi(S)[r(S)])^*\cong \psi(S)^*[-r(S)].
$$
It follows that $r(S^*)=-r(S).$ Clearly, $\mathscr{C}_i^* \approx \mathscr{C}_{-i}$ hold for all $i\in I.$

Recall from \cite[Definition 4.11.1]{EGNO15}, a \textit{tensor subcategory} in a tensor category is an additive subcategory which is closed under taking subquotients, tensor products and duality.
We obtain the following corollary.

\begin{corollary}
Let $\mathscr{C}$ and $\mathscr{D}$ be tensor categories, and let $(F,\psi,r)$ be a simple shifted monoidal derived equivalence between $\mathbf{D}^b(\mathscr{C})$ and $\mathbf{D}^b(\mathscr{D})$. For each $i\in I$, define $\mathscr{E}_i= \sum_{m\in\mathbb{Z}} \mathscr{C}_{im}$ and $\mathscr{F}_i=\sum_{m\in\mathbb{Z}} \mathscr{D}_{im}$. Then $\mathscr{E}_i$ and $\mathscr{F}_i$ are tensor subcategories of $\mathscr{C}$ and $\mathscr{D}$, respectively. Moreover, $G\mid_{\mathscr{E}_i}:\mathscr{E}_i\rightarrow \mathscr{F}_i$ is a tensor equivalence.
\end{corollary}

\section{The existence of a simple-shifted derived monoidal equivalence}\label{section4}
In this section, we consider under what conditions a simple-shifted monoidal derived equivalence exists between monoidal derived categories. For simplicity, if a monoidal triangulated functor $F$ between monoidal derived categories admits a triple that is simple-shifted, then we simply say that $F$ is simple-shifted. By the Mac Lane strictness theorem (\cite[Theorem 2.8.5]{EGNO15}), every monoidal category is monoidal equivalent to a strict monoidal category. For simplicity, the associator is omitted throughout this section.

Let $\mathscr{C}$ be a ring category with left duals.
Recall that an object $X\in \mathscr{C}$ is called \textit{invertible} if $\operatorname{ev}_X$ and $\operatorname{coev}_X$ are isomorphisms.
According to \cite[Exercise 4.3.11(3)]{EGNO15}, every invertible object is simple.
We claim that if $X\in\mathscr{C}$ is invertible and $Y\in\mathscr{C}$ is a simple object, then $X\otimes Y$ is a simple object. Indeed, if there exists a simple object $Z\in \mathscr{C}$ such that $Z\hookrightarrow X\otimes Y$, then we have $X^*\otimes Z\hookrightarrow X^*\otimes X\otimes Y$. Since $X^*\otimes X\otimes Y\cong Y$, it follows that $Z\cong X\otimes Y.$ The same assertion also holds for $Y\otimes X$. Moreover, by \cite[Proposition 2.11.3(i)]{EGNO15}, we have ${}^*X\cong X^*$ and $X^*$ is invertible.

\begin{lemma}\label{lem:invertible}
Let $\mathscr{C}$ and $\mathscr{D}$ be ring categories with left duals, and let $S$ be an invertible object in $\mathscr{C}$. If $F:\mathbf{D}^b(\mathscr{C})\rightarrow \mathbf{D}^b(\mathscr{D})$ is a monoidal derived equivalence, then $F(S)\cong T[n]$ for some invertible object $T\in\mathscr{D}$ and $n\in\mathbb{Z}$.
\end{lemma}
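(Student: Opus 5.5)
We plan to use the additivity of the amplitude functions $a$ and $b$ from Corollary \ref{coro:a,b}, applied in $\mathbf{D}^b(\mathscr{D})$, together with the fact that a monoidal equivalence preserves the unit and duals. Since $S$ is invertible, $S\otimes S^*\cong \mathbbm{1}_{\mathscr{C}}\cong S^*\otimes S$. Applying the monoidal functor $F$ gives
$$F(S)\widetilde{\otimes} F(S^*)\cong F(\mathbbm{1}_{\mathscr{C}})\cong \mathbbm{1}_{\mathscr{D}}\cong F(S^*)\widetilde{\otimes} F(S).$$
Both $F(S)$ and $F(S^*)$ are nonzero, since $F$ is an equivalence.

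First, we compute amplitudes. Since $\mathscr{D}$ is a ring category with left duals, Corollary \ref{coro:a,b} gives
$$a(F(S))+a(F(S^*))=a(\mathbbm{1}_{\mathscr{D}})=0,\qquad b(F(S))+b(F(S^*))=b(\mathbbm{1}_{\mathscr{D}})=0.$$
Because $a\le b$ for each of the two objects, subtracting yields $(b-a)(F(S))+(b-a)(F(S^*))=0$ with both terms nonnegative. Hence $a(F(S))=b(F(S))=:-n$. An object of $\mathbf{D}^b(\mathscr{D})$ with cohomology in a single degree is isomorphic to the shifted stalk complex, so $F(S)\cong T[n]$ with $T:=H^{-n}(F(S))\in\mathscr{D}$ nonzero. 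In the same way, $F(S^*)\cong T'[-n]$ with $T'=H^{n}(F(S^*))$.

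Second, we show that $T$ is invertible. Taking $H^0$ of the isomorphisms above and using Lemma \ref{lem:Hn=HiHj}, we get $T\otimes T'\cong \mathbbm{1}_{\mathscr{D}}\cong T'\otimes T$ in $\mathscr{D}$. To conclude that $\operatorname{ev}_T$ and $\operatorname{coev}_T$ are isomorphisms, we would rather work directly with duals. By \cite[Exercise 2.10.6]{EGNO15} (as already used in Section \ref{section3}), $F(S^*)\cong F(S)^*\cong (T[n])^*\cong T^*[-n]$, so $T'\cong T^*$. The evaluation $\operatorname{ev}_{F(S)}$ corresponds under $F$ to $F(\operatorname{ev}_S)$ composed with the structure isomorphisms, and $F(\operatorname{ev}_S)$ is an isomorphism. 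Hence $\widetilde{\operatorname{ev}}_{T[n]}$ is an isomorphism in $\mathbf{D}^b(\mathscr{D})$. Taking $H^0$, $\widetilde{\operatorname{ev}}_{T[n]}$ is identified with $\operatorname{ev}_T$ up to sign, using the explicit description of the left dual of a complex. The same argument applies to coevaluation.

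The main obstacle is this last identification: one must check that the evaluation of $F(S)$ transported from $S$ agrees, up to an isomorphism, with the canonical evaluation of $T[n]$. This follows from uniqueness of duals up to unique isomorphism (\cite[Proposition 2.10.5]{EGNO15}), since a monoidal functor carries the duality data $(S^*,\operatorname{ev}_S,\operatorname{coev}_S)$ to duality data for $F(S)$. If this becomes delicate, a fallback is to show that $\operatorname{ev}_T:T^*\otimes T\to\mathbbm{1}_{\mathscr{D}}$ is nonzero (by the zig-zag identity) and that $T^*\otimes T\cong \mathbbm{1}_{\mathscr{D}}$ is simple, because $\operatorname{End}(\mathbbm{1}_{\mathscr{D}})\cong\k$ and $\mathbbm{1}_{\mathscr{D}}$ is simple in a ring category with left duals. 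Then $\operatorname{ev}_T$ is an isomorphism, and similarly for $\operatorname{coev}_T$ via $T\otimes T^*\cong\mathbbm{1}_{\mathscr{D}}$.
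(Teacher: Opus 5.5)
Your proposal is correct and follows the paper's proof. As in the paper, additivity of $a$ and $b$ (Corollary \ref{coro:a,b}) applied to $F(S^*)\widetilde{\otimes} F(S)\cong \mathbbm{1}_{\mathscr{D}}$ forces $F(S)\cong T[n]$, and invertibility of $T$ comes from $F$ preserving duals \cite[Exercise 2.10.6]{EGNO15}; you spell out that last step (via uniqueness of duals, or your fallback that $\operatorname{ev}_T\neq 0$ maps into the simple unit), which the paper leaves as ``directly provable''.
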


\begin{proof}
We have
\begin{equation*}
F(S^*)\widetilde{\otimes} F(S)\cong  F(S^*\otimes S)\cong F(\mathbbm{1}_{\mathscr{C}}) \cong \mathbbm{1}_{\mathscr{D}}.
\end{equation*}
Then, by Corollary \ref{coro:a,b}, we obtain
$$a(F(S^*))+a(F(S))=b(F(S^*))+b(F(S))=0.$$
It follows that
$$b(F(S))-a(F(S))=b(F(S^*))-a(F(S^*))=0,$$
and hence
$$b(F(S^*))=a(F(S^*))=-b(F(S))=-a(F(S)).$$
Thus, we have $$F(S)\cong H^{a(F(S))}(F(S))[-a(F(S))]$$ and $$F(S^*)\cong H^{-a(F(S))}(F(S^*))[a(F(S))].$$ From \cite[Exercise 2.10.6]{EGNO15}, we can directly prove that $H^{a(F(S))}(F(S))$ is invertible.
\end{proof}

Recall from \cite[Definition 5.11.1]{EGNO15} that a tensor category $\mathscr{C}$ is called \textit{pointed} if every simple object of $\mathscr{C}$ is invertible. It is well known that the finite-dimensional comodule category of a pointed Hopf algebra gives rise to a pointed tensor category, where a Hopf algebra is called \textit{pointed} if every simple subcoalgebra is one-dimensional. By virtue of Lemma \ref{lem:invertible}, we can prove that every monoidal derived equivalence between pointed tensor categories implies a tensor equivalence, and that such a monoidal derived equivalence is exactly simple-shifted.
\begin{proposition}\label{prop:pointed}
Let $\mathscr{C}$ and $\mathscr{D}$ be tensor categories, and let $F: \mathbf{D}^b(\mathscr{C})\rightarrow \mathbf{D}^b(\mathscr{D})$ be a monoidal derived equivalence. Suppose that $\mathscr{C}$ is pointed. Then $F$ is a simple-shifted monoidal derived equivalence. Moreover, $\mathscr{C}$ is tensor equivalent to $\mathscr{D}.$
\end{proposition}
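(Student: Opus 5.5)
The plan is to verify directly that $F$ is simple-shifted and then invoke Proposition \ref{prop:tensorequ}.

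Since $\mathscr{C}$ is pointed, every simple object $S\in\mathscr{C}$ is invertible. Lemma \ref{lem:invertible} applies because tensor categories are rigid, hence ring categories with left duals. It gives $F(S)\cong T_S[n_S]$ for some invertible, hence simple, object $T_S\in\mathscr{D}$ and some $n_S\in\mathbb{Z}$.

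The pair $(T_S,n_S)$ is determined by $F(S)$. Indeed, $n_S=-a(F(S))$, and $T_S\cong H^{a(F(S))}(F(S))$ up to isomorphism. So we may define
\[
\psi(S):=[T_S]\in\mathcal{S}_{\mathscr{D}}, \qquad r(S):=n_S .
\]
Both are well defined on isomorphism classes, because $F$ preserves isomorphisms. The triple $(F,\psi,r)$ then satisfies $F(S)\cong\psi(S)[r(S)]$ for every $S\in\mathcal{S}_{\mathscr{C}}$. Since $F$ is an equivalence, $(F,\psi,r)$ is a simple-shifted monoidal derived equivalence in the sense of Definition \ref{def:simple-shifted}.

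For the second assertion, apply Proposition \ref{prop:tensorequ}. This yields the exact monoidal equivalence $G=\bigoplus_{i\in I}G_i\colon\mathscr{C}\to\mathscr{D}$, so $\mathscr{C}$ and $\mathscr{D}$ are tensor equivalent. Both categories are tensor categories, so no rigidity needs to be transported. We could also note that $\mathscr{D}$ is pointed, since every simple object of $\mathscr{D}$ is $\psi(S)$ for some $S$ by Lemma \ref{lem:psibijective}, and $\psi(S)$ is invertible.

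The only delicate point is the hypotheses of Lemma \ref{lem:invertible}. That lemma rests on Corollary \ref{coro:a,b}, which requires $\mathscr{D}$ to be a ring category with left duals; this holds since $\operatorname{End}(\mathbbm{1}_{\mathscr{D}})\cong\k$ and $\mathscr{D}$ is rigid. It also uses $F(\mathbbm{1}_{\mathscr{C}})\cong\mathbbm{1}_{\mathscr{D}}$, which holds because $F$ is monoidal. With these checked, the argument is a direct assembly of the earlier results, and I expect no genuine obstacle.
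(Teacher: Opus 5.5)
Your proposal is correct and follows the paper's own proof essentially verbatim. Pointedness plus Lemma~\ref{lem:invertible} gives $F(S)\cong H^{a(F(S))}(F(S))[-a(F(S))]$ with an invertible, hence simple, cohomology object; this defines $\psi$ and $r$, and Proposition~\ref{prop:tensorequ} then yields the tensor equivalence.
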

\begin{proof}
From the proof of Lemma \ref{lem:invertible}, for every simple object $S\in\mathscr{C}$,
$$
F(S)\cong H^{a(F(S))}(F(S))[-a(F(S))],
$$
with $H^{a(F(S))}(F(S))$ invertible. According to \cite[Exercise 4.3.11(3)]{EGNO15}, $H^{a(F(S))}(F(S))$ is simple. Then we can define maps
\begin{equation*}
\begin{aligned}
&\psi:\mathcal{S}_{\mathscr{C}}\rightarrow \mathcal{S}_{\mathscr{D}},\qquad S\mapsto H^{a(F(S))}(F(S)),\\
&r:\mathcal{S}_{\mathscr{C}}\rightarrow \mathbb{Z},\qquad S\mapsto -a(F(S)).
\end{aligned}
\end{equation*}
Clearly, $(F,\psi, r)$ is simple-shifted. Then by Proposition \ref{prop:tensorequ}, $\mathscr{C}$ and $\mathscr{D}$ are tensor equivalent.
\end{proof}

Before proceeding further, we state the following lemma.
\begin{lemma}\label{lem:finite}
Let $\mathscr{C}$ and $\mathscr{D}$ be tensor categories, and let $F: \mathbf{D}^b(\mathscr{C})\rightarrow \mathbf{D}^b(\mathscr{D})$ be a monoidal triangulated functor. Suppose that $\mathscr{C}$ has finitely many simple objects. Then $a(F(X))=b(F(X))=0$ for all $X\in\mathscr{C}$ with $F(X)\neq 0$.
\end{lemma}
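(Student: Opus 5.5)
The plan is to bound the cohomological amplitude of $F$ on objects of $\mathscr{C}$ uniformly, using that there are only finitely many simples. Then I would use the additivity of $a$ and $b$ under tensor products (Corollary \ref{coro:a,b}, applied in $\mathbf{D}^b(\mathscr{D})$, which is legitimate since $\mathscr{D}$ is a tensor category, hence a ring category with left duals) to show that any nonzero shift would grow without bound under tensor powers.

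\emph{Step 1 (uniform bound).} Let $S_1,\dots,S_t$ be the simple objects of $\mathscr{C}$. Set
$$M:=\max\{b(F(S_k)) : F(S_k)\neq 0\}\quad\text{and}\quad m:=\min\{a(F(S_k)) : F(S_k)\neq 0\}.$$
These are finite, and the set is nonempty since $F(\mathbbm{1}_{\mathscr{C}})\cong\mathbbm{1}_{\mathscr{D}}\neq 0$. I claim that for every $X\in\mathscr{C}$ and every $n\notin[m,M]$ we have $H^n(F(X))=0$. The proof is by induction on the length of $X$, as in the proof of Lemma \ref{lem:psibijective}. Take a short exact sequence $0\to Y\to X\to S\to 0$ with $S$ simple, apply $F$ to the associated distinguished triangle, and read off the long exact cohomology sequence. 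Then $H^n(F(X))$ is an extension of a subobject of $H^n(F(S))$ by a quotient of $H^n(F(Y))$. Both of these vanish for $n\notin[m,M]$: for $F(S)$ by definition of $m,M$ (or trivially if $F(S)=0$), and for $F(Y)$ by induction.

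\emph{Step 2 (simples are not shifted).} Let $S$ be simple with $F(S)\neq 0$. For each $k\ge 1$, monoidality gives $F(S^{\otimes k})\cong F(S)^{\widetilde{\otimes} k}$. By Corollary \ref{coro:a,b}, iterated, this complex is nonzero, and
$$a(F(S^{\otimes k}))=k\,a(F(S)),\qquad b(F(S^{\otimes k}))=k\,b(F(S)).$$
Since $S^{\otimes k}$ is an object of $\mathscr{C}$, Step 1 forces $m\le k\,a(F(S))\le k\,b(F(S))\le M$ for all $k$. Letting $k\to\infty$ gives $a(F(S))\ge 0$ and $b(F(S))\le 0$. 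Together with $a\le b$, this yields $a(F(S))=b(F(S))=0$.

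\emph{Step 3 (arbitrary objects).} Step 2 shows we may in fact take $m=M=0$ in Step 1. Rerunning the induction of Step 1 then gives $H^n(F(X))=0$ for all $n\neq 0$ and all $X\in\mathscr{C}$. Hence if $F(X)\neq 0$, its only nonzero cohomology sits in degree $0$, so $a(F(X))=b(F(X))=0$.

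The main point needing care is Step 2. There the nonvanishing of $F(S)^{\widetilde{\otimes}k}$ is essential, and it relies on Lemma \ref{lem:reduced} inside $\mathscr{D}$ via Corollary \ref{coro:a,b}. A second subtlety is that simples with $F(S)=0$ (possible since $F$ is not assumed to be an equivalence) must be excluded from the definitions of $m$ and $M$. They do no harm in the induction, since they contribute zero cohomology.
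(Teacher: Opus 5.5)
Your proposal is correct and follows essentially the same route as the paper. You bound the cohomology uniformly over the finitely many simples, propagate that bound by induction on length, and then use the tensor-power argument via Corollary \ref{coro:a,b} in $\mathbf{D}^b(\mathscr{D})$; your Step 3 spells out the final induction from simples to arbitrary objects, which the paper leaves implicit.
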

\begin{proof}
The proof is analogous to that of \cite[Theorem 4.5]{XZ26}; for the sake of completeness, however, we provide an alternative proof. Assume that $\mathcal{S}_{\mathscr{C}}=\{S_i\mid 1\leq i\leq n\}$. Then there exist integers $u$ and $v$ such that
$$
u\leq a(F(S_i))\leq b(F(S_i))\leq v
$$
for all $1\leq i\leq n$ with $F(S_i)\neq 0$. By induction on the length of objects in $\mathscr{C}$, one can prove that
$$
u\leq a(F(X))\leq b(F(X))\leq v
$$
 for all $X\in\mathscr{C}$ with $F(X)\neq 0$. Note that for every $1\leq i\leq n$ and any positive integer $m$, we have
$$
u\leq a(F(S_i^{\otimes m}))\leq b(F(S_i^{\otimes m}))\leq v.
$$
By Corollary \ref{coro:a,b}, we obtain $a(F(X))=b(F(X))=0$.
\end{proof}

In a tensor category $\mathscr{C}$, we may consider the tensor subcategory $\mathscr{E}$ generated by the unit object $\mathbbm{1}_{\mathscr{C}}$ (which is isomorphic to $\mathrm{Vec}_{\k}$). It is the smallest tensor subcategory containing $\mathbbm{1}_{\mathscr{C}}$. Clearly, it has a unique simple object, namely $\mathbbm{1}_{\mathscr{C}}$. In a pointed tensor category, for any simple object $X$, $X\otimes X^*$ and $X^*\otimes X$ lie in $\mathscr{E}$. For Proposition \ref{prop:pointed}, we can obtain a more general version.
\begin{theorem}\label{thm:SotimesS*}
Let $\mathscr{C}$ and $\mathscr{D}$ be tensor categories, and let $F: \mathbf{D}^b(\mathscr{C})\rightarrow \mathbf{D}^b(\mathscr{D})$ be a monoidal derived equivalence. Suppose that for every simple object $S\in \mathscr{C}$, there exists a tensor subcategory $\mathscr{E}_{S}$ of $\mathscr{C}$ with finitely many simple objects such that either $S\otimes S^* \in \mathscr{E}_{S}$ or $S^*\otimes S\in \mathscr{E}_{S}$. Then $F$ is simple-shifted. Moreover, $\mathscr{C}$ is tensor equivalent to $\mathscr{D}.$
\end{theorem}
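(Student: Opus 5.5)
The plan is to show that for every simple $S\in\mathscr{C}$ the complex $F(S)$ has cohomology concentrated in a single degree, with that cohomology simple. The triple $(F,\psi,r)$ with $\psi(S)=H^{a(F(S))}(F(S))$ and $r(S)=-a(F(S))$ is then simple-shifted, and Proposition \ref{prop:tensorequ} yields the tensor equivalence.

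\emph{Step 1: the tensor subcategory is sent to degree zero.} Fix a simple $S$ and assume, say, $S\otimes S^*\in\mathscr{E}_S$; the other case is symmetric. Restrict $F$ to $\mathbf{D}^b(\mathscr{E}_S)$, composing with the functor $\mathbf{D}^b(\mathscr{E}_S)\to\mathbf{D}^b(\mathscr{C})$ induced by the exact inclusion. This composite is a monoidal triangulated functor $\mathbf{D}^b(\mathscr{E}_S)\to\mathbf{D}^b(\mathscr{D})$, and Lemma \ref{lem:finite} only uses monoidality and the finiteness of simples. Lemma \ref{lem:finite} then gives $a(F(X))=b(F(X))=0$ for every nonzero $X\in\mathscr{E}_S$, in particular for $X=S\otimes S^*$. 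Here $F(S\otimes S^*)\neq 0$, because $F$ is an equivalence and $S\otimes S^*\neq 0$ by Lemma \ref{lem:reduced}.

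\emph{Step 2: $F(S)$ is a shifted object.} We have $F(S\otimes S^*)\cong F(S)\widetilde{\otimes}F(S^*)\cong F(S)\widetilde{\otimes}F(S)^*$, and $H^{-i}(F(S)^*)\cong H^i(F(S))^*$. Corollary \ref{coro:a,b} gives
\[
a(F(S))+a(F(S)^*)=a(F(S))-b(F(S))=0 .
\]
Hence $a(F(S))=b(F(S))=:-r(S)$ and $F(S)\cong M[r(S)]$ with $M=H^{-r(S)}(F(S))\in\mathscr{D}$ nonzero.

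\emph{Step 3 (main obstacle): $M$ is simple.} Degree concentration alone does not force simplicity, so I would use full faithfulness. We have $\operatorname{Hom}_{\mathscr{D}}(M,M)\cong\operatorname{Hom}_{\mathbf{D}^b(\mathscr{C})}(S,S)\cong\k$, so $M$ is indecomposable with trivial endomorphisms, but this is not yet simple. To get simplicity, pick a simple subobject $U\hookrightarrow M$ and write $F^{-1}(U)=Y$. The nonzero map $U\to M$ gives a nonzero map $Y[-r(S)]\to S$ in $\mathbf{D}^b(\mathscr{C})$.

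Step 2 applied to $F^{-1}$ is not available directly, since the hypothesis lives on $\mathscr{C}$, not on $\mathscr{D}$. Instead I would argue via Step 1 as follows. Every simple of $\mathscr{C}$ goes to a shifted object. By an induction on length in the style of Lemma \ref{lem:psibijective}, together with the density of $F$, every simple $U$ of $\mathscr{D}$ is a composition factor of some $M_S$. Then compare lengths using $\operatorname{Ext}$ vanishing as in Lemmas \ref{lem:ext1=0} and \ref{lem:F(A_i)}. The argument is that $F^{-1}(U)$ has a cohomology filtration whose factors map to shifted $M_T$'s, and since $U$ is simple the filtration must be trivial. This forces $F^{-1}(U)\cong T[-r(T)]$ for a simple $T$ with $M_T\cong U$. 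Applying this to a simple quotient of $M_S$, together with $\operatorname{End}(M_S)=\k$ and the fact that $M_T\to M_S$ corresponds to a map between simples, shows that $M_S$ equals one of these simples.

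This length-comparison step is the delicate one. An alternative route is to use $M\otimes M^*\cong H^0(F(S\otimes S^*))$ and the fact that $\operatorname{Hom}(\mathbbm{1},M\otimes M^*)\cong\operatorname{End}(M)\cong\k$, but that does not directly give simplicity either. So I expect Step 3 to need the bijection-of-simples argument above.
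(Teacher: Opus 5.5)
Your Steps 1 and 2 are correct, and they are exactly how the paper begins. Lemma~\ref{lem:finite}, applied through $\mathbf{D}^b(\mathscr{E}_S)$, puts $F(S\otimes S^*)$ in degree $0$. Corollary~\ref{coro:a,b} then gives $F(S)\cong M[r(S)]$ with $M=H^{-r(S)}(F(S))$.

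The gap is Step 3, which is where the real work lies. Two things go wrong.
\begin{enumerate}
\item Your appeal to Lemmas~\ref{lem:ext1=0} and~\ref{lem:F(A_i)} is circular. Both are proved for a triple already known to be simple-shifted. Lemma~\ref{lem:ext1=0} uses $\operatorname{Hom}(\psi(S),\psi(T))=0$ for distinct \emph{simples} $\psi(S),\psi(T)$, and also that $F^{-1}$ is simple-shifted. That is precisely what you are trying to establish.
\item The claim that a filtration of $F^{-1}(U)$ ``must be trivial since $U$ is simple'' is unjustified. Nothing monoidal is used after Step 2, so no argument of this kind can work. Take $A=\k(1\to 2)$ with $0\to S_2\to P_1\to S_1\to 0$. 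The APR tilting equivalence $\operatorname{RHom}_A(P_1\oplus S_1,-)$ sends $S_1$ to the two-dimensional indecomposable module $\operatorname{Hom}_A(P_1\oplus S_1,S_1)$ in degree $0$, and sends $S_2$ to a simple in degree $1$. Its inverse sends the one-dimensional simple $\operatorname{Hom}_A(P_1\oplus S_1,P_1)$ to the non-simple module $P_1$.
\end{enumerate}
So ``every simple goes to a stalk complex'', even combined with density, length counting and $\operatorname{End}(M_T)=\k$, does not yield simplicity. There is also a smaller issue: a map $M_T\to M_S$ corresponds to an element of $\operatorname{Hom}(T,S[r(T)-r(S)])$. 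This is a morphism between simples only when $r(T)=r(S)$, which you have not shown.

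The missing idea is to use the finiteness hypothesis a second time, on the $\mathscr{D}$ side, to make $F^{-1}$ $t$-exact near $M\otimes M^*$.
\begin{enumerate}
\item By Step 1, $H^0F$ is exact and monoidal on $\mathscr{E}_S$. Let $\mathcal{R}$ be the finite set of composition factors of the $H^0F(T)$ with $T\in\mathcal{S}_{\mathscr{E}_S}$. Let $\mathscr{F}_{\mathcal{R}}\subseteq\mathscr{D}$ be the Serre subcategory they generate.
\item Since $H^0F(T)\otimes H^0F(T')\cong H^0F(T\otimes T')$ and $H^0F(T^*)\cong H^0F(T)^*$, $\mathscr{F}_{\mathcal{R}}$ is a tensor subcategory with finitely many simples. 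Lemma~\ref{lem:finite} applied to $F^{-1}$ then makes $H^0F^{-1}$ exact and monoidal on $\mathscr{F}_{\mathcal{R}}$.
\item We have $M\otimes M^*\cong H^0F(S\otimes S^*)\in\mathscr{F}_{\mathcal{R}}$. Let $Y\subseteq M$ be nonzero. Then $Y\otimes M^*\subseteq M\otimes M^*$ is nonzero by Lemma~\ref{lem:reduced}, and it pulls back to a nonzero subobject $N\subseteq S\otimes S^*$.
\item The map $N\otimes S\to S\otimes S^*\otimes S\xrightarrow{\id_S\otimes\operatorname{ev}_S}S$ is adjoint to the inclusion $N\hookrightarrow S\otimes S^*$. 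Hence it is nonzero, and so an epimorphism because $S$ is simple.
\item After tensoring with $S^*$ this map is still an epimorphism. By naturality, transported through $H^0F^{-1}$, it factors through $N\hookrightarrow S\otimes S^*$. Hence $N=S\otimes S^*$, so $Y\otimes M^*=M\otimes M^*$, and therefore $Y=M$.
\end{enumerate}
Your ``alternative route'' through $M\otimes M^*$ was the right direction. What it lacked was this transported finite tensor subcategory $\mathscr{F}_{\mathcal{R}}$, which is what lets you pull subobjects back along $F^{-1}$.
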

\begin{proof}
For any simple object $S\in\mathscr{C}$, we may assume without loss of generality that there exists a tensor subcategory $\mathscr{E}_{S}$ of $\mathscr{C}$ with finitely many simple objects such that $S\otimes S^*\in \mathscr{E}_{S}$.
Note that we have
$$
F(S)\widetilde{\otimes }F(S^*)\cong F(S\otimes S^*).
$$
From Lemma \ref{lem:finite}, we obtain that $$a(F(S)\widetilde{\otimes }F(S^*))=b(F(S)\widetilde{\otimes }F(S^*))=0.$$
A similar argument as in the proof of Lemma \ref{lem:invertible} shows that
$$F(S)\cong H^{a(F(S))}(F(S))[-a(F(S))]$$ and $$F(S^*)\cong H^{-a(F(S))}(F(S^*))[a(F(S))].$$
For simplicity, we write $H^{a(F(S))}(F(S))$ by $X.$ It follows that $$H^0 F(S\otimes S^*)\cong X\otimes X^*.$$
We define a subset of $\mathcal{S}_{\mathscr{D}}$ and a full subcategory of $\mathscr{D}$ as follows:
$$
\mathcal{R}:=\{R\in\mathcal{S}_{\mathscr{D}}\mid R \text{ is a composition factor of } H^0F(T), \text{ where } T\in \mathcal{S}_{\mathscr{E}_S} \},
$$
$$
\mathscr{F}_{\mathcal{R}}:=\{M\in \mathscr{D}\mid \text{the composition factors of } M \text{ arise from } \mathcal{R}\}.
$$
By Lemma \ref{lem:finite} and the argument in the proof of Proposition \ref{prop:equ}, the functor $H^0 F\mid_{\mathscr{E}_{S}}$ is exact.
By induction on the length of objects in $\mathscr{E}_{S}$, we can prove that
$
H^0 F(\mathscr{E}_{S}) \subseteq \mathscr{F}_{\mathcal{R}}.
$
We claim that $\mathscr{F}_{\mathcal{R}}$ is a tensor subcategory of $\mathscr{D}$ with finitely many simple objects.
Indeed, it follows directly from the definition that $\mathscr{F}_{\mathcal{R}}$ is closed under taking subquotients, and that $\mathscr{F}_{\mathcal{R}}$ has finitely many simple objects. For any simple objects $U, V\in \mathscr{F}_{\mathcal{R}}$, there exist $T, T^\prime\in \mathcal{S}_{\mathscr{E}_{S}}$ such that $U$ and $V$ are subquotients of $H^0F(T)$ and $H^0F(T^\prime)$, respectively. It follows that $U\otimes V$ is a subquotient of $H^0F(T) \otimes H^0F(T^\prime)$, and hence of $H^0F(T\otimes T^\prime)$. This means that $U\otimes V\in \mathscr{F}_{\mathcal{R}}.$ Similarly, by induction, one can show that $\mathscr{F}_{\mathcal{R}}$ is closed under taking tensor products. A similar argument shows that $\mathscr{F}_{\mathcal{R}}$ is closed under taking duality. Thus, $\mathscr{F}_{\mathcal{R}}$ is a tensor subcategory of $\mathscr{D}$ with finitely many simple objects.
By Lemma \ref{lem:finite}, we have $a(F^{-1}(M))=b(F^{-1}(M))=0$ for all nonzero $M\in\mathscr{F}_{\mathcal{R}}$, and
$$H^0F^{-1}\mid_{\mathscr{F}_{\mathcal{R}}}:\mathscr{F}_{\mathcal{R}}\rightarrow \mathscr{C}$$
is exact.
We next show that $X$ is a simple object in $\mathscr{D}$. Assume that $Y$ is a nonzero subobject of $X$. Then by Lemma \ref{lem:reduced}, $Y\otimes X^*$ is a nonzero subobject of $X\otimes X^*$. Since $H^0 F(\mathscr{E}_{S}) \subseteq \mathscr{F}_{\mathcal{R}}$, it follows that $Y\otimes X^*\in \mathscr{F}_{\mathcal{R}}$. Using the fact that $H^0F^{-1}\mid_{\mathscr{F}_{\mathcal{R}}}$ is exact, we obtain $H^0F^{-1}(Y\otimes X^*)$ is a subobject of $H^0F^{-1}(X\otimes X^*)$ ($\cong S\otimes S^*$). Let $j: Y\hookrightarrow X$ denote the standard inclusion, and consider
$$
f:=(\id_{S}\otimes \operatorname{ev}_{S})\circ(i\otimes \id_S): H^0F^{-1}(Y\otimes X^*)\otimes S\rightarrow S,
$$
where $$i:=H^0F^{-1}(j\otimes \id_{X^*}) :H^0F^{-1}(Y\otimes X^*)\hookrightarrow S\otimes S^*.$$
Clearly, $f\neq 0$, which implies that $f$ is an epimorphism.
It follows from \cite[Exercise 2.10.6]{EGNO15} that
\begin{equation*}
\begin{aligned}
f\otimes \id_{S^*}&=(\id_{S}\otimes \operatorname{ev}_{S}\otimes \id_{S^*})\circ(i\otimes \id_S\otimes \id_{S^*})\\
&=
H^0F^{-1}( \id_X \otimes \operatorname{ev}_X \otimes \id_{X^*}) \circ  H^0F^{-1}(j \otimes \id_{X^*} \otimes \id_{X}\otimes \id_{X^*} )\\
&=H^0F^{-1}((j \otimes \id_{X^*}) \circ (\id_Y \otimes \operatorname{ev}_X \otimes \id_{X^*}))\\
&=i \circ H^0F^{-1} (\id_Y \otimes \operatorname{ev}_X \otimes \id_{X^*}).
\end{aligned}
\end{equation*}
This implies that $i$ is an epimorphism, and hence an isomorphism. Consequently, $Y\cong X$. This completes the proof.
\end{proof}

It should be noted that in the above theorem, $\mathscr{E}_S$ is a tensor subcategory depending on $S$, and is therefore not fixed. Moreover, the above theorem includes \cite[Remark 4.6]{XZ26} (see also \cite[Corollary 4.7]{Yu26}) as a special case, which asserts that a monoidal derived equivalence between tensor categories with finitely many simple objects yields their tensor equivalence.

In \cite[Definition 4.14.5]{EGNO15}, the adjoint subcategory was defined for semisimple tensor categories. Motivated by this notion, we extend the terminology to any tensor category $\mathscr{C}$ (not necessarily semisimple) by defining its adjoint subcategory, denoted $\mathscr{C}_{ad}$, to be the smallest tensor Serre subcategory containing $S\otimes S^*$ for all $S\in\mathcal{S}_{\mathscr{C}}$. As a corollary of Theorem \ref{thm:SotimesS*}, we have:
\begin{corollary}
Let $\mathscr{C}$ and $\mathscr{D}$ be tensor categories, and let $F: \mathbf{D}^b(\mathscr{C})\rightarrow \mathbf{D}^b(\mathscr{D})$ be a monoidal derived equivalence. Suppose that $\mathscr{C}_{ad}$ has finitely many simple objects. Then $F$ is simple-shifted. Moreover, $\mathscr{C}$ is tensor equivalent to $\mathscr{D}.$
\end{corollary}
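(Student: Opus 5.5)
The statement to prove is the Corollary: if $\mathscr{C}_{ad}$ has finitely many simple objects, then $F$ is simple-shifted and $\mathscr{C}\simeq\mathscr{D}$ as tensor categories. I plan to reduce it directly to Theorem \ref{thm:SotimesS*}, taking the same subcategory $\mathscr{E}_S:=\mathscr{C}_{ad}$ for every simple object $S\in\mathscr{C}$. The hypotheses of that theorem then split into three checks:
\begin{itemize}
\item[(a)] $\mathscr{C}_{ad}$ is a tensor subcategory of $\mathscr{C}$ in the sense of \cite[Definition 4.11.1]{EGNO15};
\item[(b)] $\mathscr{C}_{ad}$ has finitely many simple objects;
\item[(c)] $S\otimes S^*\in\mathscr{C}_{ad}$ for every $S\in\mathcal{S}_{\mathscr{C}}$.
\end{itemize}
Once these hold, Theorem \ref{thm:SotimesS*} gives both that $F$ is simple-shifted and, through Proposition \ref{prop:tensorequ}, that $\mathscr{C}$ is tensor equivalent to $\mathscr{D}$.

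Checks (b) and (c) are immediate. Condition (b) is exactly the hypothesis of the corollary. Condition (c) holds by the definition of $\mathscr{C}_{ad}$ as the smallest tensor Serre subcategory containing all $S\otimes S^*$.

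For (a), I would note that a tensor Serre subcategory is in particular closed under subquotients, tensor products and duals. It is also additive, since it is closed under extensions and hence under finite direct sums. So it satisfies \cite[Definition 4.11.1]{EGNO15}.

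The only real issue is whether this "smallest" subcategory exists. I would settle this by showing that the intersection of any family of full subcategories, each closed under subquotients, extensions, tensor products and duals, is again closed under these operations. Such a family is nonempty, since $\mathscr{C}$ itself belongs to it, so $\mathscr{C}_{ad}$ is well defined. Beyond this point the argument is a direct application of the theorem and needs no further work.
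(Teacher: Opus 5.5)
Your proposal is correct and is the paper's own argument. The paper gives no written proof and simply presents the statement as a corollary of Theorem \ref{thm:SotimesS*}, applied with $\mathscr{E}_S=\mathscr{C}_{ad}$ for every simple $S$. Your remarks on additivity and on the existence of the smallest tensor Serre subcategory (via intersections) fill in details the paper leaves unstated.
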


\begin{corollary}\label{coro:finiteotimesinvertible}
Let $\mathscr{C}$ and $\mathscr{D}$ be tensor categories, and let $F: \mathbf{D}^b(\mathscr{C})\rightarrow \mathbf{D}^b(\mathscr{D})$ be a monoidal derived equivalence. Suppose that every simple object $S\in\mathscr{C}$ admits a factorization $S \cong R\otimes T$ or $S\cong T\otimes R$, where $R$ is invertible and $T$ is a simple object belonging to a tensor subcategory $\mathscr{E}_{S}$ of $\mathscr{C}$ with finitely many isomorphism classes of simple objects. Then $F$ is simple-shifted. Moreover, $\mathscr{C}$ is tensor equivalent to $\mathscr{D}.$
\end{corollary}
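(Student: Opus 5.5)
The plan is to reduce Corollary \ref{coro:finiteotimesinvertible} to the proofs of Lemma \ref{lem:invertible} and Theorem \ref{thm:SotimesS*}, and then apply Proposition \ref{prop:tensorequ}. Fix a simple object $S\in\mathscr{C}$; by symmetry assume $S\cong R\otimes T$ with $R$ invertible and $T$ a simple object in a tensor subcategory $\mathscr{E}_S$ with finitely many simple objects. The case $S\cong T\otimes R$ is identical after swapping the order of the tensor factors. The goal is to show $F(S)\cong \psi(S)[r(S)]$ for a simple object $\psi(S)\in\mathscr{D}$ and an integer $r(S)$. This defines the simple-shifted triple $(F,\psi,r)$, and Proposition \ref{prop:tensorequ} then gives the tensor equivalence.

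First, apply Lemma \ref{lem:invertible} to $R$. It gives $F(R)\cong R'[n]$ with $R'\in\mathscr{D}$ invertible and $n\in\mathbb{Z}$.

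Second, treat $T$. Since $T$ is simple and lies in $\mathscr{E}_S$, the object $T\otimes T^*$ also lies in $\mathscr{E}_S$, because tensor subcategories are closed under duals and tensor products. So the hypothesis of Theorem \ref{thm:SotimesS*} holds for the single object $T$ with the subcategory $\mathscr{E}_T:=\mathscr{E}_S$. The proof of that theorem is local in $S$: it uses only the finiteness of $\mathscr{E}_S$ via Lemma \ref{lem:finite}, the construction of $\mathscr{F}_{\mathcal{R}}$, and the exactness of $H^0F^{-1}$ on $\mathscr{F}_{\mathcal{R}}$. Running it verbatim for $T$ gives $F(T)\cong X[m]$ with $X\in\mathscr{D}$ simple and $m=-a(F(T))$. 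This is the step I expect to need the most care. I will have to check that the argument of Theorem \ref{thm:SotimesS*} really uses nothing global about all simple objects of $\mathscr{C}$, only the one object $T$ and its subcategory $\mathscr{E}_S$. From the proof as written this appears to be the case.

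Third, combine the two pieces using the monoidal structure of $F$:
$$F(S)\cong F(R\otimes T)\cong F(R)\widetilde{\otimes}F(T)\cong (R'[n])\widetilde{\otimes}(X[m])\cong (R'\otimes X)[n+m].$$
By the remark preceding Lemma \ref{lem:invertible}, the tensor product of an invertible object with a simple object is simple, so $R'\otimes X$ is simple in $\mathscr{D}$. Set $\psi(S):=R'\otimes X$ and $r(S):=n+m$.

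These choices depend only on the isomorphism class of $S$, since they are read off from $F(S)$ as $H^{-r(S)}(F(S))$ and $-a(F(S))$; so $\psi$ and $r$ are well defined. Hence $(F,\psi,r)$ is a simple-shifted monoidal derived equivalence, and Proposition \ref{prop:tensorequ} yields that $\mathscr{C}$ is tensor equivalent to $\mathscr{D}$.
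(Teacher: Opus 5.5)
Your proof is correct, but it follows a different and somewhat longer route than the paper.

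The paper never splits $F(S)$. For $S\cong R\otimes T$ it notes $S^*\otimes S\cong T^*\otimes R^*\otimes R\otimes T\cong T^*\otimes T\in\mathscr{E}_S$, and similarly $S\otimes S^*\cong T\otimes T^*$ when $S\cong T\otimes R$. So every simple object of $\mathscr{C}$ satisfies the hypothesis of Theorem \ref{thm:SotimesS*} verbatim, and the theorem is applied as a black box.

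You instead use monoidality to write $F(S)\cong F(R)\widetilde{\otimes}F(T)$. You handle $R$ by Lemma \ref{lem:invertible}, handle $T$ by a pointwise version of Theorem \ref{thm:SotimesS*}, and reassemble using ``invertible $\otimes$ simple is simple''.

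The pointwise version is genuinely needed in your route, because the theorem as stated requires the hypothesis for all simple objects, not just for the objects $T$. It does hold: the proof of Theorem \ref{thm:SotimesS*} uses only the one simple object and its subcategory $\mathscr{E}_S$, via Lemma \ref{lem:finite} applied on $\mathscr{E}_S$ and on $\mathscr{F}_{\mathcal{R}}$. Still, the paper's identity $S^*\otimes S\cong T^*\otimes T$ makes this re-inspection unnecessary, and that is exactly the step you flagged as delicate. It also removes the need for Lemma \ref{lem:invertible}.

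In return, your argument gives more explicit information: $\psi(R\otimes T)\cong\psi(R)\otimes\psi(T)$ and $r(R\otimes T)=r(R)+r(T)$. In other words, the simple objects that $F$ sends to shifted simples are stable under tensoring with invertible objects.
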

\begin{proof}
Without loss of generality, we assume that every simple object $S\in\mathscr{C}$ satisfies $S\cong R\otimes T$, where $R$ is invertible and $T$ is a simple object belongs to a tensor subcategory $\mathscr{E}_{S}$ of $\mathscr{C}$ with finitely many isomorphism classes of simple objects.
By \cite[Exercise 2.10.7(b)]{EGNO15}, we have $$ S^*\otimes S\cong  T^*\otimes R^* \otimes R\otimes T \cong T^*\otimes T.$$  Since $T$ lies in a tensor subcategory with finitely many simple objects, $T^*\otimes T$ also lies in this subcategory.
Using Theorem \ref{thm:SotimesS*}, we can complete the proof.
\end{proof}

We provide the following example to illustrate that the conditions required in the above corollary are satisfied not only by pointed tensor categories but also by other tensor categories.
\begin{example}\rm
Let $\k$ be an algebraically closed field of characteristic $0$.
We construct a Hopf algebra of the form $(R_{2,2})^*\#\k\mathbb{Z}$., where $(R_{2,2})^*$ is the dual of the $8$-dimensional Radford Hopf algebra $R_{2,2}$ \cite{Rad75}. The structure of this dual can also be found in \cite{ACE15}. Specifically, let $H$ be a Hopf algebra over $\k$ generated as an algebra by $A, X, g, g^{-1}$ with relations
\begin{equation*}
A^4=gg^{-1}=g^{-1}g=1,\;\; X^2=0,\;\; AX=\sqrt{-1}XA,\;\; Ag=gA,\;\; Xg=gX.
\end{equation*}
Its coalgebra structure and antipode are determined respectively by
\begin{equation*}
    \begin{aligned}
 &\Delta(A)= A\otimes A+2XA^3\otimes XA,\;\; \varepsilon(A)=1,\;\;S(A)=A^3,\\
&\Delta(X)=X\otimes1+A^2\otimes X,\;\; \varepsilon(X)=0,\;\;S(X)=XA^2,\\
& \Delta(g)=g\otimes g,\;\;\Delta(g^{-1})=g^{-1}\otimes g^{-1},\;\;\varepsilon(g^{-1})=\varepsilon(g)=1,\;\;S(g)=g^{-1},\;\;S(g^{-1})=g.
\end{aligned}
\end{equation*}
Note that the Hopf subalgebra of $H$ generated by $A$ and $X$ is precisely the dual $(R_{2,2})^*$ of the $8$-dimensional Radford Hopf algebra.
The simple subcoalgebras of $H$ are given by $$\{\k g^i, \k A^2g^i \mid i\in\mathbb{Z} \}\cup \{C_i \mid i\in\mathbb{Z} \},$$
where the comatrix coalgebra corresponding to each $C_i$ is represented in matrix form as
$$
\begin{pmatrix}
Ag^i & \sqrt{2} XA^3g^i \\
\sqrt{2} XAg^i & A^3g^i
\end{pmatrix}.
$$
Clearly, $H$ is not pointed. Indeed, its coradical is not a Hopf subalgebra, which by definition means that $H$ fails to have the dual Chevalley property.
It is well known that the simple objects in the finite-dimensional comodule category $\mathscr{M}^{H}$ of $H$ are determined by its simple subcoalgebras (see \cite[Theorem 3.2.11]{Rad12}). We can directly prove that every simple object $S$ in $\mathscr{M}^{H}$ is isomorphic to $T \otimes R$, where $R$ is a one-dimensional simple comodule of $H$ and $T$ is a simple comodule of $(R_{2,2})^*$. According to Corollary \ref{coro:finiteotimesinvertible}, the monoidal triangulated equivalence between the monoidal derived categories of a tensor category $\mathscr{C}$ and $\mathscr{M}^H$ implies that $\mathscr{C}$ and $\mathscr{M}^H$ are tensor equivalent.
\end{example}

Recall that the \textit{Ext quiver} $\mathrm{Q}(\mathscr{A})$ of a $\k$-linear abelian category $\mathscr{A}$ is defined as an oriented graph with vertex set indexed by $\mathcal{S}_\mathscr{A}$, where for any $S, T \in \mathcal{S}_\mathscr{A}$, the number of arrows from $S$ to $T$ is given by $\dim_k \operatorname{Ext}^1_{\mathscr{A}}(S, T)$. By \cite[Exercise 1.5.10(i) and (ii)]{EGNO15}, there is a unique decomposition
$$
\mathscr{A}\cong \bigoplus_{\gamma\in \Gamma} \mathscr{A}^{\gamma},
$$
where each $\mathscr{A}^{\gamma}$ is a block of $\mathscr{A}$, and for every $\gamma \in \Gamma$, the Ext quiver $\mathrm{Q}(\mathscr{A}^{\gamma})$ of $\mathscr{A}^{\gamma}$ is connected.

The following theorem shows that the existence of simple-shifted monoidal derived equivalences does not actually require the entire tensor category to possess a global property; it suffices that each of its components satisfies a uniform property.
\begin{theorem}\label{thm:Cgamma}
Let $\mathscr{C}$ and $\mathscr{D}$ be tensor categories, and let $F: \mathbf{D}^b(\mathscr{C})\rightarrow \mathbf{D}^b(\mathscr{D})$ be a monoidal derived equivalence. Write $\mathscr{C}\cong \bigoplus_{\gamma\in \Gamma} \mathscr{C}^{\gamma}.$ Suppose that, for each $\gamma \in \Gamma$, at least one of the following conditions holds.
\begin{itemize}
    \item [(i)] For every simple object $S\in \mathscr{C}^\gamma$, there exists a tensor subcategory $\mathscr{E}_{S}$ of $\mathscr{C}$ with finitely many simple objects such that either $S\otimes S^* \in \mathscr{E}_{S}$ or $S^*\otimes S\in \mathscr{E}_{S}$.
        \item[(ii)] For every simple object $S\in \mathscr{C}^\gamma$, we have $S\cong S^*.$
    \item [(iii)] There exists a simple object $S^\gamma\in \mathscr{C}^{\gamma}$ such that every simple object $S\in \mathscr{C}^{\gamma}$ is of the form $S\cong L_S\otimes S^\gamma \otimes R_S$, where $L_S, R_S\in \mathscr{C}$ are invertible objects satisfying $a(F(L_S\otimes R_S))=b(F(L_S\otimes R_S))=0$.
\end{itemize}
Then $F$ is simple-shifted. Moreover, $\mathscr{C}$ is tensor equivalent to $\mathscr{D}.$
\end{theorem}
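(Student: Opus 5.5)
Since $F$ is a monoidal derived equivalence, the aim is to show that for every simple $S\in\mathscr{C}$ the complex $F(S)$ is a shifted simple object, i.e.\ $a(F(S))=b(F(S))$ and $H^{a(F(S))}(F(S))$ is simple. Once this is known, set $\psi(S)=H^{a(F(S))}(F(S))$ and $r(S)=-a(F(S))$. Then $(F,\psi,r)$ is simple-shifted, and Proposition \ref{prop:tensorequ} gives the tensor equivalence. Each simple object lies in exactly one block $\mathscr{C}^\gamma$, so it suffices to treat each block separately according to which condition it satisfies.

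\emph{Blocks satisfying (i).} Here the proof of Theorem \ref{thm:SotimesS*} applies verbatim. That argument only uses the hypothesis for the single simple $S$ under consideration; it never uses the global assumption on all of $\mathscr{C}$. It shows that $F(S)\cong X[-a(F(S))]$ with $X$ simple.

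\emph{Blocks satisfying (ii).} Let $S\cong S^*$. Then $F(S)\widetilde{\otimes}F(S)\cong F(S\otimes S^*)$. Applying Corollary \ref{coro:a,b} to $F(S)$ and $F(S^*)\cong F(S)^*$ gives $a(F(S^*))=-b(F(S))$ and $b(F(S^*))=-a(F(S))$, using $H^{-i}(Y^\bullet)\cong H^i(X^\bullet)^*$ for the dual complex. Since $F(S^*)\cong F(S)$, this forces $a(F(S))=-b(F(S))$.

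This alone does not give concentration in one degree, so the plan is to use that $S$ is simple. Consider the truncation triangle $\tau_{\le b-1}F(S)\to F(S)\to H^b(F(S))[-b]\to$ and apply $F^{-1}$.
\begin{itemize}
\item My first attempt: compare $\operatorname{End}(S)=\k$ with $\operatorname{Hom}(F(S),F(S))$. The evaluation $F(S)^*\widetilde{\otimes}F(S)\to\mathbbm{1}$ is nonzero on cohomology only in degree $0$, where $H^0(F(S)^*\otimes F(S))\cong\bigoplus_p H^{p}(F(S))^*\otimes H^p(F(S))$.
\item A cleaner route: $\mathbbm{1}_{\mathscr{C}}$ is a direct summand in a suitable sense. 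Since $\operatorname{Hom}(\mathbbm{1},S\otimes S^*)\cong\k$, the corresponding $\operatorname{Hom}(\mathbbm{1}_{\mathscr{D}},F(S)\widetilde{\otimes}F(S)^*)$ is one-dimensional. On the other hand, it contains contributions from each nonzero $H^p(F(S))$ via the coevaluation components, since $\mathbbm{1}\to H^p\otimes (H^p)^*$ is nonzero in degree $0$.
\end{itemize}
Making this precise would force only one $p$, so $a=b=0$ in this self-dual case. Simplicity of $H^0(F(S))$ then follows by the argument of Theorem \ref{thm:SotimesS*} with $\mathscr{E}_S$ replaced by an exactness argument for $H^0F$ on the relevant objects. \textbf{I expect this step, case (ii), to be the main obstacle.} Controlling the Hom spaces in degree $0$ when the complex $F(S)$ is not formal requires care. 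If this route fails, I would instead exploit that $S\otimes S$ contains $\mathbbm{1}$ as a quotient and as a subobject, and analyse the truncation triangles of $F(S)\widetilde{\otimes} F(S)$.

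\emph{Blocks satisfying (iii).} First settle $S^\gamma$ itself, presumably via (i) or (ii) applied to it, or directly by the same method. By Lemma \ref{lem:invertible}, $F(L_S)\cong T_L[n_L]$ and $F(R_S)\cong T_R[n_R]$ with $T_L,T_R$ invertible. The hypothesis $a(F(L_S\otimes R_S))=b(F(L_S\otimes R_S))=0$, combined with Corollary \ref{coro:a,b}, gives $n_L+n_R=0$. Then
\[
F(S)\cong T_L\otimes F(S^\gamma)\otimes T_R,
\]
and tensoring with invertible objects preserves simplicity (as shown above Lemma \ref{lem:invertible}). Hence $F(S)$ is a shifted simple as soon as $F(S^\gamma)$ is.

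Combining the three cases defines $\psi$ and $r$ on all of $\mathcal{S}_{\mathscr{C}}$, which completes the plan.
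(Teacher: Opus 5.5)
Your treatment of blocks satisfying (i) is fine and agrees with the paper: the proof of Theorem \ref{thm:SotimesS*} is indeed per-simple. Cases (ii) and (iii), however, are not proved, and the two gaps have a common source. You try to control $F(S)$ for $S$ simple in $\mathscr{C}$, but that complex lives in $\mathbf{D}^b(\mathscr{D})$. The hypotheses (self-duality, invertible twisting) are stated for $\mathscr{C}$, so they tell you nothing about the simple constituents of $H^{a}(F(S))$ and $H^{b}(F(S))$.

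In (ii), the step ``$\operatorname{Hom}(\mathbbm{1},F(S)\widetilde{\otimes}F(S)^*)\cong\operatorname{End}(F(S))$ receives a contribution from each nonzero $H^p(F(S))$'' is false. The maps $\mathbbm{1}\to H^p\otimes(H^p)^*$ are morphisms into $H^0$ in $\mathscr{D}$, and they need not lift to $\mathbf{D}^b(\mathscr{D})$. Indeed, an indecomposable complex with cohomology in two degrees can have endomorphism ring $\k$ (e.g.\ $P_2\to P_1$ over $\k A_3/\mathrm{rad}^2$, with cohomology $S_3$ and $S_1$). Your argument also uses only $S\cong S^*$ for the single $S$, never the block-wise hypothesis. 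Your simplicity step has nothing to stand on either: without (i) there is no finite $\mathscr{F}_{\mathcal{R}}$ on which $H^0F^{-1}$ is exact.

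In (iii), the reduction $F(S)\cong T_L\otimes F(S^\gamma)\otimes T_R$ is correct. But $S^\gamma$ need not satisfy (i) or (ii), and the base case is the entire content. For semisimple $\mathscr{C}$ every block is a single simple $S$ with $L_S=R_S=\mathbbm{1}$, so your reduction is vacuous and what remains is exactly Tan's theorem.

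The missing idea is to argue with $F^{-1}$. For a simple $U\in\mathscr{D}$, the complex $K=F^{-1}(U)$ satisfies $\operatorname{End}(K)\cong\operatorname{End}_{\mathscr{D}}(U)$, a division algebra. So $K$ is indecomposable and may be taken in $\mathbf{D}^b(\mathscr{C}^{\gamma_0})$ for a single block. Hence a simple quotient $T$ of $H^{b(K)}(K)$ and a simple subobject $S$ of $H^{a(K)}(K)$ lie in the same block, and the block hypothesis relates them.
\begin{itemize}
\item Under (ii), $H^{b(K)}(K)\twoheadrightarrow T\cong T^*\hookrightarrow H^{b(K)}(K)^*$ yields a nonzero morphism $K\to K^*[-2b(K)]$.
\item Under (iii), $T\cong L\otimes S\otimes R$, with $L,R$ invertible and $a(F(L\otimes R))=b(F(L\otimes R))=0$ (obtained by combining the presentations of $S$ and $T$). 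This yields a nonzero morphism $K\to (L\otimes K\otimes R)[a(K)-b(K)]$.
\end{itemize}
Applying $F$ gives nonzero morphisms from the stalk $U$ to $U^*[-2b(K)]$, respectively to $(F(L)\otimes U\otimes F(R))[a(K)-b(K)]$. Here $F(L)\otimes U\otimes F(R)$ is a simple object in degree $0$ by Lemma \ref{lem:invertible}. The standard $t$-structure then forces $b(K)\le 0$ (and dually $a(K)\ge 0$) in case (ii), and $a(K)=b(K)$ in case (iii).

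Simplicity of the surviving cohomology follows because $F$ carries the analogous map $H\twoheadrightarrow T\hookrightarrow\cdots$ to a nonzero morphism between simple objects of $\mathscr{D}$, which must be an isomorphism. So $F^{-1}$ is simple-shifted, and Lemma \ref{lem:psibijective} transfers this to $F$. This direction works because $U$ is a stalk, so the $t$-structure gives the needed vanishing, while $K$ lives in $\mathscr{C}$, where the hypotheses apply.
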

\begin{proof}
Our idea is to prove that $F^{-1}$ is simple-shifted. Once this is established, Lemma \ref{lem:psibijective} allows us to prove that $F$ is also simple-shifted.
For any simple object $U\in\mathscr{D}$, let $K^\bullet=F^{-1}(U)$. Since $F$ is fully faithful, it follows that
$$
\operatorname{End}_{\mathbf{D}^b(\mathscr{C})}(K^\bullet)\cong \operatorname{End}_{\mathbf{D}^b(\mathscr{D})}(U)\cong \operatorname{End}_{\mathscr{D}}(U).
$$
By \cite[Lemma 1.5.2]{EGNO15}, $\operatorname{End}_{\mathbf{D}^b(\mathscr{C})}(K^\bullet)$ is a division algebra. Consequently, $K^\bullet$ is indecomposable. 
Note that each component $K^j$ of the complex $K^\bullet$ admits a block decomposition
$$
K^j\cong \bigoplus_{\gamma\in\Gamma} K^{j, \gamma},
$$
where $K^{j, \gamma} \in \mathscr{C}^{\gamma}.$
The indecomposability of $K^\bullet$ forces it to be isomorphic to an object in $\mathbf{D}^b(\mathscr{C})$ with the property that all of its components lie in the same block $\mathscr{C}^{\gamma_0}$ for some $\gamma_0\in\Gamma$. Without loss of generality, we may assume that $K^\bullet$ itself is such an object. The validity of Theorem \ref{thm:SotimesS*} ensures that we only need to consider the case where $\mathscr{C}^{\gamma_0}$ satisfies conditions (ii) or (iii).

Assume that condition (ii) holds. Let $T$ be a simple quotient object of $H^{b(K^\bullet)}(K^\bullet)$. Let $f$ denote the composition of the following maps:
\begin{equation*}
H^{b(K^\bullet)}(K^\bullet) \twoheadrightarrow T \cong T^* \hookrightarrow (H^{b(K^\bullet)}(K^\bullet))^*\cong H^{-b(K^\bullet)}((K^\bullet)^*).
\end{equation*}
Clearly, we have $f\neq 0.$
Moreover, we have the following distinguished triangles
$$
H^{a(K^\bullet)}(K^\bullet)[-a(K^\bullet)]\rightarrow K^\bullet\rightarrow \tau_{\geq a(K^\bullet)+1}K^\bullet\rightarrow H^{a(K^\bullet)}(K^\bullet)[1-a(K^\bullet)]
$$
and
$$
\tau_{\leq b(K^\bullet)-1}(K^\bullet)\rightarrow  K^\bullet\rightarrow H^{b(K^\bullet)}(K^\bullet)[-b(K^\bullet)] \rightarrow (\tau_{\leq b(K^\bullet)-1}(K^\bullet))[1].
$$
Let $g$ denote the composition of the following maps:
\begin{equation*}
K^\bullet\rightarrow H^{b(K^\bullet)}(K^\bullet)[-b(K^\bullet)] \xrightarrow{f[-b(K^\bullet)]} H^{-b(K^\bullet)}((K^\bullet)^*)[-b(K^\bullet)]\rightarrow (K^\bullet)^*[-2b(K^\bullet)].
\end{equation*}
We have $H^{b(K^\bullet)}(g)=f,$ which forces $g\neq0$. Since $F(g)\in \operatorname{Hom}_{\mathbf{D}^b(\mathscr{D})}(U, U^*[-2b(K^\bullet)])$, it follows that $b(K^\bullet)\leq0$. A similar argument shows that $a(K^\bullet)\geq 0$. Thus, we have $$a(K^\bullet)=b(K^\bullet)=0.$$ We claim that $H^0(K^\bullet)$ is simple. Indeed, the isomorphism $\operatorname{Hom}_{\mathbf{D}^b(\mathscr{D})}(U, U^*) \cong \operatorname{Hom}_{\mathscr{D}}(U, U^*)$ guarantees that $F(f)$ is an isomorphism. It follows that $f$ is an isomorphism. Hence, $H^0(K^\bullet)$ is a simple object.

Provided that condition (iii) is satisfied, we show that $a(K^\bullet)=b(K^\bullet)$. Assume for contradiction that $a(K^\bullet)< b(K^\bullet)$. The cohomology objects $H^{a(K^\bullet)}(K^\bullet)$ and $H^{b(K^\bullet)}(K^\bullet)$ admit a simple subobject $S\in\mathscr{C}$ and a simple quotient object $T\in\mathscr{C}$, respectively. Clearly, $S, T\in \mathscr{C}^{\gamma_0}.$ Hence, one can find invertible objects $L, R\in\mathscr{C}$ for which $$T\cong L\otimes S\otimes R$$ and $$a(F(L\otimes R))=b(F(L\otimes R))=0.$$
Let $f^\prime$ denote the composition of the following maps:
\begin{equation*}
  H^{b(K^\bullet)} (K^\bullet) \twoheadrightarrow T\cong  L\otimes S\otimes R \hookrightarrow L\otimes H^{a(K^\bullet)}(K^\bullet) \otimes R.
\end{equation*}
It follows that $f^\prime\neq 0.$
Let $g^\prime$ denote the composition of the following maps:
\begin{equation*}
\begin{aligned}
&K^\bullet \rightarrow H^{b(K^\bullet)}(K^\bullet)[-b(K^\bullet)] \xrightarrow{f^\prime[-b(K^\bullet)]}   (L\otimes H^{a(K^\bullet)}(K^\bullet) \otimes R) [-b(K^\bullet)] \\
&\rightarrow (L\otimes K^{\bullet}\otimes R)[a(K^\bullet)-b(K^\bullet)].
\end{aligned}
\end{equation*}
Since $H^{b(K^\bullet)}(g^\prime)=f^\prime\neq 0$, it follows that $g^\prime\neq 0.$
Note that we have
\begin{equation*}
    \begin{aligned}
         &\operatorname{Hom}_{\mathbf{D}^b(\mathscr{C})}(K^\bullet, L\otimes K^\bullet \otimes R[a(K^\bullet)-b(K^\bullet)])\\
         \cong&
         \operatorname{Hom}_{\mathbf{D}^b(\mathscr{D})}(U, (F(L)\otimes U \otimes F(R))[a(K^\bullet)-b(K^\bullet)])\\
         =&0,
    \end{aligned}
\end{equation*}
where the last equality follows from the fact that $(\mathbf{D}^{\leq 0}_{\mathscr{D}},\mathbf{D}^{\geq 0}_{\mathscr{D}})$ is a standard $t$-structure. This leads to a contradiction. Thus, we have $$F^{-1}(U)=H^{a(K^\bullet)}(K^\bullet)[-a(K^\bullet)].$$For simplicity, we denote $H^{a(K^\bullet)}(K^\bullet)$ by $X$. It remains to show that $X$ is simple. Suppose $V$ is a simple subobject and $W$ is a simple quotient object of $X$, with $V, W\in\mathcal{S}_{\mathscr{C}}$. There exist invertible objects $L^\prime, R^\prime \in\mathscr{C}$ such that $$W\cong L^\prime \otimes V\otimes R^\prime$$
and $$a(F(L^\prime\otimes R^\prime))=b(F(L^\prime\otimes R^\prime))=0.$$
Let $h^\prime$ be defined as the composition of the maps below:
$$
X\twoheadrightarrow W \cong L^\prime \otimes V\otimes R^\prime\hookrightarrow L^\prime \otimes X\otimes R^\prime.
$$
It follows that $h^\prime\neq 0.$ Then we have
\begin{equation*}
 F(h^\prime): U [a(K^\bullet)] \rightarrow (F(L^\prime)  \otimes U\otimes  F(R^\prime))[a(K^\bullet)].
\end{equation*}
According to Lemma \ref{lem:invertible}, $$F(L^\prime)  \otimes U\otimes  F(R^\prime)\cong L^{\prime\prime} \otimes U \otimes  R^{\prime\prime}$$ for some invertible objects $L^{\prime\prime}, R^{\prime\prime}\in\mathscr{D}$.
The object $L^{\prime\prime} \otimes U \otimes R^{\prime\prime}$ is simple, and thus $$U\cong L^{\prime\prime} \otimes U \otimes R^{\prime\prime}.$$
Since \begin{equation*}
\begin{aligned}
\operatorname{Hom}_{\mathbf{D}^b(\mathscr{C})}(X, L^{\prime} \otimes X \otimes  R^{\prime})
\cong
\operatorname{Hom}_{\mathbf{D}^b(\mathscr{D})}(U, L^{\prime\prime} \otimes U \otimes  R^{\prime\prime})
\cong \operatorname{Hom}_{\mathscr{D}}(U, L^{\prime\prime} \otimes U \otimes  R^{\prime\prime}),
\end{aligned}
\end{equation*}
it follows that $h^\prime$ is an isomorphism. Note that $\operatorname{Im}(h^\prime)\cong W$, which implies that $X\cong W$. Thus, $F^{-1}$ maps a simple object to the shift of a simple object.
\end{proof}

Observe that when $\mathscr{C}$ is semisimple, every block consists of a single simple object $S$. The isomorphism $S\cong \mathbf{1}\otimes S\otimes \mathbf{1}$ holds trivially, which implies that the above theorem generalizes \cite[Theorem 4.4]{Tan26}. We can also give non-semisimple examples satisfying conditions (ii) and (iii) of the above theorem.
\begin{example}
Let $\k$ be an algebraically closed field of characteristic $0$. We construct a Hopf algebra of the form $\k\langle x\rangle\# H(e_{\pm 1}, f_{\pm 1}, u, v)_0$, where $H(e_{\pm 1}, f_{\pm 1}, u, v)_0$ is the coradical of the Hopf algebra $H(e_{\pm 1}, f_{\pm 1}, u, v)$ introduced in \cite[Definition 5.1]{YL26}. More explicitly, let $H$ be a Hopf algebra over $\k$ generated as an algebra by $x, e_i, f_i$ for $i\in\mathbb{Z}$, subject to the following relations
\begin{equation*}
      1=e_0+f_0, \;\;e_ie_j=e_{i+j},\;\;f_if_j=f_{i+j},\;\;e_if_j=f_je_i=0,\;\;  e_i x=xe_i,\;\; f_i x= xf_i.
\end{equation*}
The comultiplication, counit and the antipode are given by
\begin{equation*}
    \begin{aligned}
        &\Delta(e_i)=e_i\otimes e_i+f_i\otimes f_{-i},\;\;\varepsilon(e_i)=1,\;\;S(e_i)=e_{-i},\\
        &\Delta(f_i)=e_i\otimes f_i+f_i\otimes e_{-i},\;\;\varepsilon(f_i)=0,\;\;S(f_i)=f_{i},\\
        &\Delta(x)=1\otimes x+x \otimes 1,\;\; \varepsilon(x)=0,\;\; S(x)=-x.
    \end{aligned}
\end{equation*}
For each $i\geq 1$, let $C_i=\operatorname{span}\{e_i, f_i, e_{-i}, f_{-i}\}$. We can show that each $C_i$ is a simple subcoalgebra.
Note that $H$ is a Hopf algebra with the dual Chevalley property, whose coradical is given by
$$
H_0=\k 1\oplus \k (e_0-f_0)\oplus \bigoplus_{i\geq 1}C_i.
$$
The Grothendieck ring structure of the category of $H_0$-comodules ensures that $H$ does not satisfy the conditions in Theorem \ref{thm:SotimesS*}. See \cite[Section 5]{YL26} for details on the Grothendieck ring structure of $\mathscr{M}^{H_0}$.
In our previous work \cite[Sections 3 and 4]{YLL24}, we studied the Ext quiver of the finite-dimensional comodule category of Hopf algebras with the dual Chevalley property. We can show that the Ext quiver of $\mathscr{M}^H$ is of the following form:
$$
\begin{tikzpicture}
\filldraw [black] (4,-0.7) circle (2pt)node[anchor=west]{};
\filldraw [black] (6,-0.7) circle (2pt)node[anchor=west]{};
\filldraw [black] (8,-0.7) circle (2pt)node[anchor=west]{};
\filldraw [black] (10,-0.7) circle (2pt)node[anchor=west]{};
\filldraw [black] (11,-0.7) circle (0pt)node[anchor=west]{$\cdots$};
\draw[thick, ->] (3.9,-0.6) arc (0:340:0.6);
\draw[thick, ->] (5.9,-0.6) arc (0:340:0.6);
\draw[thick, ->] (7.9,-0.6) arc (0:340:0.6);
\draw[thick, ->] (9.9,-0.6) arc (0:340:0.6);
\end{tikzpicture}
$$
A direct verification shows that $H$ satisfies condition (iii). Whenever there is a monoidal triangulated equivalence between the monoidal derived categories of a tensor category $\mathscr{C}$ and $\mathscr{M}^H$, Theorem \ref{thm:Cgamma} ensures that $\mathscr{C}$ and $\mathscr{M}^H$ are tensor equivalent. Moreover, we know that $S(C_i)=C_i$ for all $i\geq 1.$ For all Hopf algebras $H^\prime$ with coradical $H_0$, one can directly prove that every simple object in their finite-dimensional comodule category satisfies condition (ii). A specific example is given by taking $H^\prime$ to be $H(e_{\pm 1}, f_{\pm 1}, u, v)$ as introduced in \cite[Definition 5.1]{YL26}.
\end{example}

We point out that, for a coquasi-Hopf algebra with the dual Chevalley property, the Ext quiver of its comodule category and the coalgebra structures of its connected components have been characterized in our previous work \cite{Yu26b}. Furthermore, in that work, we introduced the notion of a generalized Hopf quiver and used quiver-theoretic methods to construct (coquasi-)Hopf algebras with the dual Chevalley property. Consequently, examples of tensor categories satisfying the conditions of Theorem \ref{thm:Cgamma} can be found in abundance among (coquasi-)Hopf algebras with the dual Chevalley property. 

\section{Applications to quantum groups}\label{section5}
In this section, for convenience, we take $\k$ to be the complex field $\mathbb{C}$. Let $\mathfrak{g}$ be a finite-dimensional complex semisimple Lie algebra of rank $n$. Let $q \in \mathbb{C}$ be a nonzero element. If $q$ is a root of unity, we assume that its order $l>1$ is odd and that $l$ is relatively prime to $3$ whenever $\mathfrak{g}$ has a simple component of type $G_2.$ We apply the results of Section \ref{section4} to the categories of finite-dimensional representations of the quantized enveloping algebra $U_q(\mathfrak{g})$, the quantized Borel subalgebra $U_q^{\geq 0}(\mathfrak{g})$, and the quantized coordinate ring $\mathcal{O}_{q}(G)$. We prove that any tensor category monoidally derived equivalent to the finite-dimensional representation category of one of these Hopf algebras is tensor equivalent to it.

Suppose that $q$ is not a root of unity. The quantized enveloping algebra $U_q(\mathfrak{g})$ was introduced by Drinfel'd \cite{Dri85} and Jimbo \cite{Jim85}. This is an infinite-dimensional Hopf algebra with generators elements $E_i, F_i, K_i^{\pm 1}$, $i=1,2,\dots,n$. According to \cite[Theorem 5.12]{JL92}, the category of finite-dimensional representations of $U_q(\mathfrak{g})$ is semisimple. Consequently, by \cite[Theorem 4.4]{Tan26} (also, Theorem \ref{thm:Cgamma}), every tensor category monoidally derived equivalent to this representation category is tensor equivalent to it.

The quantized Borel subalgebra $U_q^{\geq 0}(\mathfrak{g})$ is the subalgebra of $U_q(\mathfrak{g})$ generated by $E_i, K_i^{\pm 1},$ $i=1,2,\dots,n$. By \cite[Theorem 11.5]{Jos94}, all finite-dimensional simple modules of $U_q^{\geq 0}(\mathfrak{g})$ are \(1\)-dimensional. Therefore, the finite-dimensional representation category of $U_q^{\geq 0}(\mathfrak{g})$ is pointed. As a corollary of Proposition \ref{prop:pointed}, if a tensor category is monoidally derived equivalent to this representation category, it is tensor equivalent to it. Let $G$ be a simply connected complex semisimple algebraic group with Lie algebra $\mathfrak{g}$. By \cite[Proposition 4.4]{Jos94}, the finite-dimensional representation category of the quantized coordinate ring $\mathcal{O}_{q}(G)$ (see, for example, \cite[Chapter 7]{Jan96}) is also pointed. Therefore, an analogous assertion holds for $\mathcal{O}_{q}(G)$.

Next, we consider the case where $q$ is a primitive root of unity. Inspired by \cite{HQW26,HQWZ26}, we first consider the case where $H$ is a Hopf algebra over $\k$ with a central Hopf subalgebra $C$ such that $H$ is a finitely generated $C$-module. Put $A:=H/\ker(\varepsilon|_{C})H$, and let ${}_{A}\mathscr{M}$ denote the category of finite-dimensional representations of $A$. By \cite[Theorem 2.3(2)]{BC21}, $I:=\ker(\varepsilon|_{C})H$ is a Hopf ideal, and $A$ is a finite-dimensional Hopf algebra. Through the quotient map, any finite-dimensional $A$-module can be naturally viewed as an $H$-module. Equivalently, ${}_{A}\mathscr{M}$ can be identified with the full subcategory
$$
\{M\in {}_{H}\mathscr{M}\mid I \cdot M=0\}.
$$
We have the following lemma.
\begin{lemma}\label{lem:(H,C)}
Let $H$ be a Hopf algebra over $\mathbb{C}$, and let $C$ be a central Hopf subalgebra of $H$ such that $H$ is a finitely generated $C$-module. Then for any finite-dimensional simple $H$-module $V$, the tensor products $V\otimes V^*$ and $V^*\otimes V$ both lie in ${}_{A}\mathscr{M}$.
\end{lemma}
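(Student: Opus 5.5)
Since $I=\ker(\varepsilon|_C)H$ is a Hopf ideal and $A=H/I$, an $H$-module $M$ lies in ${}_{A}\mathscr{M}$ exactly when $c\cdot m=\varepsilon(c)m$ for all $c\in C$ and $m\in M$. Indeed, $IM=0$ is equivalent to $\ker(\varepsilon|_C)M=0$: every element of $I$ is a sum of terms $ch$ with $c\in\ker(\varepsilon|_C)$ and $h\in H$, and $C$ is central, so $ch\cdot m=h\cdot(c\cdot m)$. So the plan is to show that $C$ acts on $V\otimes V^*$ and on $V^*\otimes V$ through the counit.

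First, $C$ acts on $V$ by a character. The module $V$ is finite-dimensional and simple over $\mathbb{C}$, and $C$ is central. So each $c\in C$ acts by an $H$-endomorphism of $V$, which by Schur's lemma is a scalar $\chi(c)$. The map $\chi:C\to\mathbb{C}$ is then an algebra homomorphism. Next, the dual $V^*$ carries the action $(h\cdot f)(v)=f(S(h)v)$. Since $C$ is a Hopf subalgebra, $S(C)\subseteq C$, so $c$ acts on $V^*$ by the scalar $\chi(S(c))$. Hence $C$ acts on $V^*$ via the character $\chi\circ S$, which is the convolution inverse $\chi^{-1}$ in the group of characters of $C$. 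The same holds for ${}^*V$ with $S^{-1}$ in place of $S$, if needed.

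On the tensor product, $c\in C$ acts by $\sum c_{(1)}\otimes c_{(2)}$, and $\Delta(C)\subseteq C\otimes C$. For $v\otimes f\in V\otimes V^*$ this gives
$$c\cdot(v\otimes f)=\sum \chi(c_{(1)})\chi(S(c_{(2)}))\,v\otimes f=\chi\Big(\sum c_{(1)}S(c_{(2)})\Big)\,v\otimes f=\varepsilon(c)\,v\otimes f.$$
The middle step uses that $\chi$ is multiplicative, and the last step uses the antipode axiom together with $\chi(1)=1$. The computation for $V^*\otimes V$ is the same using $\sum S(c_{(1)})c_{(2)}=\varepsilon(c)1$. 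Therefore $\ker(\varepsilon|_C)$ annihilates both tensor products, so both lie in ${}_{A}\mathscr{M}$.

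The hypothesis that $H$ is a finitely generated $C$-module is not needed for this argument. It is used only elsewhere, to guarantee via \cite[Theorem 2.3(2)]{BC21} that $A$ is a finite-dimensional Hopf algebra. The only point needing care is the Schur step. Since $V$ is finite-dimensional and $\mathbb{C}$ is algebraically closed, $\operatorname{End}_H(V)=\mathbb{C}$, so central elements act by scalars. I do not expect a real obstacle here; the rest is bookkeeping with the antipode axiom.
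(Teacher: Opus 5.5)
Your proposal is correct and follows essentially the same route as the paper. Schur's lemma gives a character $\chi$ of $C$ on $V$, and the antipode axiom then gives $c\cdot(v\otimes f)=\chi(c_{(1)}S(c_{(2)}))\,v\otimes f=\varepsilon(c)\,v\otimes f$. Your extra remarks make explicit what the paper leaves implicit: the reduction of $I\cdot M=0$ to $\ker(\varepsilon|_C)\cdot M=0$, the use of $S(C)\subseteq C$ and $\Delta(C)\subseteq C\otimes C$, and the fact that finite generation over $C$ is not needed for this step.
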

\begin{proof}
Let $V$ be a finite-dimensional simple $H$-module. For any $c\in C$, define the linear map $T_c: V\to V$ by $T_c(v)=c\cdot v$. Because $C$ is central, $T_c$ is an $H$-module endomorphism. By Schur's lemma, there exists a scalar $\chi(c)\in\mathbb{C}$ such that $T_c(v)=\chi(c)v$ for all $v\in V$. It is clear that $\chi$ defines an algebra homomorphism from $C$ to $\mathbb{C}$. For any $v\otimes f\in V\otimes V^*$ and $c\in C$,
\begin{equation*}
c\cdot (v\otimes f)=\chi(c_1) v \otimes \chi(S(c_2)) f=\chi(c_1S(c_2))v\otimes f=\varepsilon(c)v\otimes f,
\end{equation*}
where Sweedler notation $\Delta(c)=c_1 \otimes c_2$ is applied to the coproduct. Consequently, $$I\cdot (V\otimes V^*)=0,$$ which implies $V\otimes V^*\in {}_{A}\mathscr{M}$. A similar argument shows that $V^*\otimes V \in {}_{A}\mathscr{M}$.
\end{proof}

As a corollary of Theorem \ref{thm:Cgamma} and Lemma \ref{lem:(H,C)}, we have
\begin{corollary}\label{coro:(H,C)}
Let $H$ be a Hopf algebra over $\mathbb{C}$ with a bijective antipode, and let $C$ be a central Hopf subalgebra of $H$ such that $H$ is a finitely generated $C$-module. Then every tensor category monoidally derived equivalent to ${}_{H}\mathscr{M}$ is tensor equivalent to ${}_{H}\mathscr{M}$.
\end{corollary}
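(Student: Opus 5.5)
The plan is to reduce Corollary \ref{coro:(H,C)} to condition (i) of Theorem \ref{thm:Cgamma}, or directly to Theorem \ref{thm:SotimesS*}, with ${}_{A}\mathscr{M}$ as the finite tensor subcategory. First, ${}_{H}\mathscr{M}$ must be a tensor category in the sense of this paper. It is $\k$-linear abelian with biexact tensor product, $\operatorname{End}(\mathbbm{1})\cong\k$, and every object has finite length because the modules are finite-dimensional. Rigidity holds because the antipode of $H$ is bijective: left duals use $S$ and right duals use $S^{-1}$.

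Next, regard ${}_{A}\mathscr{M}=\{M\in{}_{H}\mathscr{M}\mid I\cdot M=0\}$ as a full subcategory and check that it is a tensor subcategory in the sense of \cite[Definition 4.11.1]{EGNO15}.
\begin{enumerate}[(a)]
\item It is closed under subquotients, since annihilation by $I$ passes to submodules and quotients.
\item It is closed under tensor products and duals. This follows because $I$ is a Hopf ideal by \cite[Theorem 2.3(2)]{BC21}: we have $\Delta(I)\subseteq I\otimes H+H\otimes I$ and $S^{\pm1}(I)\subseteq I$. Equivalently, the quotient $H\to A$ is a Hopf map, so the tensor and dual structures on $A$-modules agree with those inherited from $H$.
\end{enumerate}
Since $A$ is finite-dimensional, ${}_{A}\mathscr{M}$ has only finitely many isomorphism classes of simple objects, namely the simple $A$-modules.

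Now let $S$ be any simple object of ${}_{H}\mathscr{M}$. By Lemma \ref{lem:(H,C)}, both $S\otimes S^*$ and $S^*\otimes S$ lie in ${}_{A}\mathscr{M}$. Take $\mathscr{E}_S={}_{A}\mathscr{M}$ for every $S$. Then condition (i) of Theorem \ref{thm:Cgamma} holds for every block, and even the global hypothesis of Theorem \ref{thm:SotimesS*} holds.

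Let $\mathscr{C}$ be a tensor category with a monoidal derived equivalence $F:\mathbf{D}^b(\mathscr{C})\to\mathbf{D}^b({}_{H}\mathscr{M})$. The theorems require the source to be the category carrying the hypothesis, so apply Theorem \ref{thm:SotimesS*} to the quasi-inverse $F^{-1}:\mathbf{D}^b({}_{H}\mathscr{M})\to\mathbf{D}^b(\mathscr{C})$. One checks that $F^{-1}$ is again a monoidal triangulated equivalence, using the standard fact that a quasi-inverse of a monoidal equivalence carries an induced monoidal structure. The theorem then gives that $F^{-1}$ is simple-shifted and that ${}_{H}\mathscr{M}$ is tensor equivalent to $\mathscr{C}$.

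The only step needing care is the verification that ${}_{A}\mathscr{M}$ is closed under duality, particularly under right duals, which uses $S^{-1}$. This reduces to $S^{-1}(I)\subseteq I$. For a Hopf ideal of a Hopf algebra with bijective antipode this may need a short argument: $I=\ker(\varepsilon|_C)H$ and $C$ is central, so it suffices that $S^{-1}$ preserves $C$, or that $C$ is a Hopf subalgebra with bijective antipode, as is standard in the settings of \cite{BC21}. Everything else is a direct citation of earlier results.
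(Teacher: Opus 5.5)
Your proposal is correct and is essentially the paper's argument: Lemma \ref{lem:(H,C)} puts $S\otimes S^*$ and $S^*\otimes S$ into the tensor subcategory ${}_{A}\mathscr{M}$, which has finitely many simple objects because $\dim A<\infty$, so condition (i) of Theorem \ref{thm:Cgamma} (equivalently, the hypothesis of Theorem \ref{thm:SotimesS*}) holds with ${}_{H}\mathscr{M}$ as the source category. The right-dual point you flag is quickly settled: $C$ is central, hence commutative, so $S^2|_C=\id$ and therefore $S^{-1}(\ker\varepsilon|_C)=S(\ker\varepsilon|_C)\subseteq\ker\varepsilon|_C$, which gives $S^{-1}(I)\subseteq I$.
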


With the above corollary, we are now in a position to treat the De Concini-Kac quantized enveloping algebra $U_q(\mathfrak{g})$ \cite{DK89}, the De Concini-Kac-Procesi quantized Borel subalgebra $U_q^{\geq 0}(\mathfrak{g})$ \cite{DP93}, and the quantized coordinate ring $\mathcal{O}_q(G)$ \cite{DL94} when $q$ is a root of unity. Note that their representation categories no longer possess the Chevalley property; that is, the tensor product of simple objects is not necessarily semisimple. See \cite[Theorem 0.1 and Remark 0.5]{HQW26}.

According to \cite[Corollary and Theorem 19.1]{DP93} (see also \cite[Subsection 3.2]{BC21}), $U_q(\mathfrak{g})$ satisfies the conditions of Corollary \ref{coro:(H,C)}. Likewise, $U_q^{\geq 0}(\mathfrak{g})$ satisfies the conditions of Corollary \ref{coro:(H,C)} by \cite[Proposition 1.7(c)]{DK89} and \cite[Proposition 5.6(d)]{DKP92} (see also \cite[Subsection 2.4]{HQWZ26}). For $\mathcal{O}_q(G)$, the fact that it satisfies the conditions of Corollary \ref{coro:(H,C)} follows from \cite[Proposition 6.4 and Theorem 7.2]{DL94} (see also \cite[Example 2.4]{HQW26}).

The above analysis yields the following conclusion.
\begin{proposition}\label{prop:quantum}
Let $q \in \mathbb{C}$ be a nonzero element. If $q$ is a root of unity, we assume that its order $l>1$ is odd and that $l$ is relatively prime to $3$ whenever $\mathfrak{g}$ has a simple component of type $G_2.$
Then any tensor category monoidally derived equivalent to the finite-dimensional representation category of the quantized enveloping algebra $U_q(\mathfrak{g})$, the quantized Borel subalgebra $U_q^{\geq 0}(\mathfrak{g})$, or the quantized coordinate ring $\mathcal{O}_{q}(G)$ is tensor equivalent to the corresponding category.
\end{proposition}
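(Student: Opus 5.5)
The plan is to split the statement into its three quantum-group families and into the two regimes for $q$, and in each case reduce to a result already proved in Section \ref{section4}. When $q$ is not a root of unity we will argue as in the discussion preceding the proposition.

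\textbf{Case $U_q(\mathfrak{g})$, $q$ generic.} By \cite[Theorem 5.12]{JL92} the category of finite-dimensional representations is semisimple. Hence every block contains a single simple object $S$, and $S\cong \mathbbm{1}\otimes S\otimes\mathbbm{1}$. Condition (iii) of Theorem \ref{thm:Cgamma} then holds trivially, since $a(F(\mathbbm{1}))=b(F(\mathbbm{1}))=0$ because $F(\mathbbm{1}_{\mathscr{C}})\cong\mathbbm{1}_{\mathscr{D}}$.

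\textbf{Cases $U_q^{\geq 0}(\mathfrak{g})$ and $\mathcal{O}_q(G)$, $q$ generic.} By \cite[Theorem 11.5]{Jos94} and \cite[Proposition 4.4]{Jos94} respectively, every finite-dimensional simple module is one-dimensional, hence invertible. So the category is pointed and Proposition \ref{prop:pointed} applies directly.

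\textbf{Root of unity.} We apply Corollary \ref{coro:(H,C)}. Its proof runs as follows. Let $H$ have bijective antipode, so that ${}_H\mathscr{M}$ (finite-dimensional modules) is a tensor category. For each simple $V$, Lemma \ref{lem:(H,C)} places $V\otimes V^*$ inside ${}_A\mathscr{M}$, where $A=H/\ker(\varepsilon|_C)H$ is finite-dimensional. Now ${}_A\mathscr{M}$ is a tensor subcategory of ${}_H\mathscr{M}$: it is closed under subquotients and tensor products, since $I$ is a Hopf ideal, and it is closed under duals because $S(I)\subseteq I$. It has finitely many simples because $A$ is finite-dimensional. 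Hence condition (i) of Theorem \ref{thm:Cgamma} holds for every block, equivalently Theorem \ref{thm:SotimesS*} applies, and we conclude tensor equivalence. Note that the monoidal derived equivalence is given with ${}_H\mathscr{M}$ as either source or target; if it is the target, we apply the theorem to $F^{-1}$.

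It remains to verify the hypotheses for the three algebras:
\begin{itemize}
\item For the De Concini--Kac $U_q(\mathfrak{g})$, take the central Hopf subalgebra generated by $E_\alpha^l,F_\alpha^l,K_i^{\pm l}$, which makes $U_q$ finite over it, by \cite[Theorem 19.1]{DP93} and \cite{BC21}.
\item For $U_q^{\geq 0}(\mathfrak{g})$, use the analogous subalgebra, via \cite{DK89,DKP92}.
\item For $\mathcal{O}_q(G)$, use the image of $\mathcal{O}(G)$ under the quantum Frobenius map, central and with finite index, by \cite[Proposition 6.4, Theorem 7.2]{DL94}.
\end{itemize}
In each case the antipode is bijective.

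The main obstacle is making sure these cited subalgebras really are \emph{Hopf} subalgebras, i.e.\ closed under comultiplication and antipode. For the root vectors $E_\alpha^l$ this is not obvious from the coproduct formulas. We plan to rely on the cited references (with the hypotheses on the order $l$) rather than recompute. A second point to check is that ${}_A\mathscr{M}$ is closed under duals, which uses the bijectivity of the antipode.
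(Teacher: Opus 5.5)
Your proposal is correct and follows the paper's argument. For generic $q$ you use semisimplicity, via Theorem~\ref{thm:Cgamma}(iii) with $L_S=R_S=\mathbbm{1}$ or Tan's theorem, and pointedness, via Proposition~\ref{prop:pointed}; at roots of unity you use Corollary~\ref{coro:(H,C)} through Lemma~\ref{lem:(H,C)} with the same cited central Hopf subalgebras. Your extra check that ${}_A\mathscr{M}$ is a tensor subcategory with finitely many simples fills in a detail the paper leaves implicit; closure under right duals also holds, because the antipode of the finite-dimensional Hopf algebra $A$ is bijective and therefore $S^{-1}(I)\subseteq I$.
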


\section*{Acknowledgement}
AI was used solely for language polishing and spell-checking. The author takes full responsibility for the content of the paper. The author would like to thank Yimin Huang, Tiancheng Qi, Quanshui Wu, and Zhongkai Mi for their inspiring talks, which provided insightful ideas for Section 5. The author is also grateful to Tiancheng Qi for his helpful comments on Section 5.
Yu was supported by the China Postdoctoral Science Foundation [Grant number 2026M793410] and the Postdoctoral Fellowship Program of China Postdoctoral Science Foundation [Grant number GZC20261673].

\end{document}